\documentclass[12pt,twoside]{amsart}
\usepackage{amsmath, amsthm, amscd, amsfonts, amssymb, graphicx, mathrsfs,dsfont, marvosym}
\newtheorem{thm}{Theorem}[section]
\newtheorem{cor}[thm]{Corollary}

\newtheorem{defn}[thm]{Definition}
\newtheorem{rem}[thm]{\bf{Remark}}

\numberwithin{equation}{section}
\newtheorem{ill}[thm]{ Illustration}
\newtheorem{exm}[thm]{Example}

\begin{document}
\begin{center}
	{\bf{An existence result of a functional integral equation via Darbo type theorem and an iterative algorithm to solve it
	}}
	\vspace{.5cm}
	\\ Bhuban Chandra Deuri$^{a,\ast}$ and Anupam Das$^{b}$
	\vspace{.2cm}
	
	$^{a,\ast}$Department of Mathematics, Rajiv Gandhi University, Rono Hills, Doimukh-791112, Arunachal Pradesh, India\\
	$^{b}$Department of Mathematics, Cotton University, Panbazar, Guwahati-781001,
	Assam, India\\

	$^{b,\ast}$Corresponding author. E-mail:bhuban.math@gmail.com\\
	Contributing author. Email: math.anupam@gmail.com 
\end{center}

\title{}
\author{}
\begin{abstract} 
In our study, Darbo's fixed point theorem and the coupled fixed point theorem have been extended. The two theorems were generalized using $C-class$ mapping. Using the Darbo type theorem, we gave an existence result for a functional integral equation along with an appropriate illustration. In addition, we explored the numerical solution of a given functional integral problem using modified homotopy perturbation.
	
	\vskip 0.5cm
	
	\textbf{Key Words:} Measure of noncompactness$(\mathscr{MNC})$; Darbo-type theorem; Coupled fixed point theorem; $(k,\rho)$-fractional Hilfer integral; functional integral equations;   Modified homotopy perturbation(MHP).
	
	\vskip 0.5cm
	
	\textbf{MSC subject classification No:}  45G10;47H08;47G10;47H10
\end{abstract}

\maketitle
\pagestyle{myheadings}
\markboth{\rightline {\scriptsize Deuri}}
{\leftline{\scriptsize }}

\maketitle

\section{ Introduction}
The fixed point theorems of Schauder and Darbo are essential for solving functional integral problems. The concept of a $\mathscr{MNC}$ is important in fixed point theory.
This concept was first articulated in Kuratowski's \cite{od2} pioneering study. G. Darbo\cite{od5} came up with a theory in the middle of the 19th century that said fixed points are always present when condensing operators are used.

In the year 1980, J. Bana\'{s}\cite{Banas1} defined the measure of noncompactness and many researchers solved different problems such as integral equations, functional integral equations, differential equations, and fractional integral equations etc..

Fixed point theory and the $\mathscr{MNC}$ had numerous applications for solving many forms of fractional integral equations (see \cite{7Arab,7n7,7ad11,7vp,7b2,7b1,7b3}).
The solvability of in the functional integral equations have been discussed by many authors in Banach spaces (see \cite{71Arab,71n7,71ad11,71vp,71b2,71b1,71b3}).

Many researchers generalized Darbo's types theorem with applications.
 Das et al.\cite{7das1} established some new generalizations couple fixed point theorem using C-class functions. Inzamamul et al. \cite{7Inzamamul}  analyzed the concept of the solvability of implicit fractional order integral equation in $l_{p}(1\leq p<\infty)$ space via generalized Darbo type theorem.

In this paper, we have generalized two well-known theorems, namely, Darbo's fixed point theorem and the coupled fixed point theorem. Both theorems were generalized with the help of $C-class$ mapping. Herein, first we presented an existence result of a functional integral equation along with a suitable example using the Darbo type theorem. In addition to this, we also discussed the numerical solution of a given functional integral equation with the help of modified homotopy perturbation.

\par Let a real Banach space $\left( \mathscr{B}, \parallel . \parallel \right)$, also let 
$B(c,r)=\left\lbrace b \in \mathscr{B}:\parallel b-c \parallel \leq r \right\rbrace.$ If $\mathscr{K}(\neq \emptyset) \subseteq \mathscr{B}.$ Then

\begin{itemize}
	\item $\bar{\mathscr{K}}=$ the closure of $\mathscr{K},$
	\item $Conv\mathscr{K}=$ the convex closure of $\mathscr{K},$
	\item $\mathbb{R}=(-\infty, \infty),$ 
	\item $\mathbb{R}_{+}=\left[ 0, \infty \right) .$
	\item $\mathfrak{M}_{\mathscr{B}}=$ The collection of all nonempty and bounded subsets of $\mathscr{B},$
	\\
	also
	\item $\mathfrak{N}_{\mathscr{B}}=$ The collection of all relatively compact sets.
\end{itemize}

The axioms of a $\mathscr{MNC}$  is given below \cite{Banas1}.
\begin{defn}\label{dm}
	A mapping $\mu :\mathfrak{M}_{\mathscr{B}} \rightarrow \left[ 0, \infty\right)$ is an $\mathscr{MNC}$ in $\mathscr{B}$ if it is consistent with the following axioms:
	\begin{enumerate}
		\item[(i)] For all $\mathscr{K} \in \mathfrak{M}_{\mathscr{B}},$ we get
		$\mu(\mathscr{K})=0$ gives $\mathscr{K}$ is relatively compact.
		\item[(ii)]  ker $\mu = \left\lbrace \mathscr{K} \in \mathfrak{M}_{\mathscr{B}}: \mu\left( \mathscr{K}\right)=0   \right\rbrace \neq \emptyset$ and ker $\mu \subset \mathfrak{N}_{\mathscr{B}}.$
		\item[(iii)] $\mathscr{K} \subseteq \mathscr{K}_{1} \implies \mu\left( \mathscr{K}\right) \leq \mu\left( \mathscr{K}_{1}\right).$
		\item[(iv)] $\mu\left( \bar{\mathscr{K}}\right)=\mu\left( \mathscr{K}\right).$
		\item[(v)] $\mu\left( Conv \mathscr{K}\right)=\mu\left( \mathscr{K}\right).$
		\item[(vi)] $\mu\left( \varpi \mathscr{K} +\left(1- \varpi \right)\mathscr{K}_{1} \right) \leq \varpi \mu\left( \mathscr{K}\right)+\left(1- \varpi \right)\mu\left( \mathscr{K}_{1}\right)$ for $\varpi \in [ 0, 1 ).$
		\item[(vii)] if $\mathscr{K}_{q} \in \mathfrak{M}_{\mathscr{B}}, \:\mathscr{K}_{q}= \bar{\mathscr{K}}_{q}, \: 
		\mathscr{K}_{q+1} \subset \mathscr{K}_{q}$ for $q=1,2,...$ \;also\;\; $\lim\limits_{q \rightarrow \infty}\mu\left( \mathscr{K}_{q}\right)=0$ then $\mathscr{K}_{\infty}=\bigcap_{q=1}^{\infty}\mathscr{K}_{q} \neq \emptyset.$ 
	\end{enumerate}
\end{defn}
\par In $(vii),$ $\mathscr{K}_{\infty}$ is a member of the family $ker \mu .$\\
 Since $\mu(\mathscr{K}_{\infty}) \leq \mu(\mathscr{K}_{q}),$  \; $\mu(\mathscr{K}_{\infty})=0.$
 \\ So, $\mathscr{K}_{\infty}=\bigcap_{q=1}^{\infty}\mathscr{K}_{q} \in ker \mu.$ 

\subsection*{Some significant theorems and definitions}
\par The follows two essential theorems to consider:

\begin{thm}(Shauder \cite{sh})\label{Ts}
	If $\mathscr{G}$ is a nonempty, bounded, closed and convex subset$(\mathscr{NBCCS})$ of a Banach Space ${\mathscr{B}},$ then the mapping $\Psi: \mathscr{G} \rightarrow \mathscr{G}$ has at least one fixed point if it is continuous and compact.
\end{thm}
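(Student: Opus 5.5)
The plan is to reduce Theorem \ref{Ts} to Brouwer's fixed point theorem in finite dimensions, using Schauder projections. Set $\mathscr{K}=\overline{\Psi(\mathscr{G})}$. Since $\Psi$ is compact and $\mathscr{G}$ is bounded, $\mathscr{K}$ is compact. Because $\mathscr{G}$ is closed and $\Psi(\mathscr{G})\subseteq\mathscr{G}$, we also have $\mathscr{K}\subseteq\mathscr{G}$.

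\textbf{Step 1: finite approximating maps.} Fix $\varepsilon>0$. By compactness, choose a finite $\varepsilon$-net $\{y_1,\dots,y_n\}\subseteq\mathscr{K}$. Define the continuous weights $m_i(y)=\max\{0,\varepsilon-\|y-y_i\|\}$ and the Schauder projection
\[
P_\varepsilon(y)=\frac{\sum_{i=1}^n m_i(y)\,y_i}{\sum_{i=1}^n m_i(y)},\qquad y\in\mathscr{K}.
\]
The denominator is positive on $\mathscr{K}$ because the $y_i$ form an $\varepsilon$-net. So $P_\varepsilon$ is continuous, takes values in $E_\varepsilon=\mathrm{Conv}\{y_1,\dots,y_n\}$, and satisfies $\|P_\varepsilon(y)-y\|<\varepsilon$. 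The last bound holds because only those $y_i$ with $\|y-y_i\|<\varepsilon$ carry positive weight. By convexity of $\mathscr{G}$, $E_\varepsilon\subseteq\mathscr{G}$.

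\textbf{Step 2: Brouwer on a finite-dimensional piece.} Consider $\Psi_\varepsilon=P_\varepsilon\circ\Psi$ restricted to $E_\varepsilon$. It maps $E_\varepsilon$ continuously into itself. The set $E_\varepsilon$ is a compact convex subset of a finite-dimensional space, so it is homeomorphic to a closed ball. Brouwer's theorem then gives $x_\varepsilon\in E_\varepsilon$ with $P_\varepsilon(\Psi(x_\varepsilon))=x_\varepsilon$. Consequently $\|\Psi(x_\varepsilon)-x_\varepsilon\|<\varepsilon$.

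\textbf{Step 3: passage to the limit.} Take $\varepsilon=1/k$. The points $\Psi(x_{1/k})$ lie in the compact set $\mathscr{K}$, so a subsequence converges to some $x^*\in\mathscr{K}\subseteq\mathscr{G}$. Since $\|\Psi(x_{1/k})-x_{1/k}\|<1/k$, the same subsequence satisfies $x_{1/k}\to x^*$. Continuity of $\Psi$ then gives $\Psi(x^*)=\lim\Psi(x_{1/k})=x^*$.

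The main obstacle is Step 2. There I must justify that Brouwer applies to the convex hull of finitely many points, not merely to a ball. I would handle this by noting that the hull is a compact convex set in $\mathrm{span}\{y_i\}\cong\mathbb{R}^m$. Such a set is a retract of $\mathbb{R}^m$ via the nearest-point map, and Brouwer's theorem transfers through that retraction. Everything else is routine compactness bookkeeping.
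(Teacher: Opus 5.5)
The paper does not prove this statement. Schauder's theorem is quoted from \cite{sh} and used as a black box in the proof of Theorem \ref{T7}, so there is no route of the paper's to compare against.

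Your argument is the classical reduction to Brouwer's theorem via Schauder projections, and it is correct and complete as written. The steps check out as follows:
\begin{itemize}
\item The denominator of $P_\varepsilon$ is positive on $\mathscr{K}$ because $\{y_i\}$ is an $\varepsilon$-net.
\item The bound $\|P_\varepsilon(y)-y\|<\varepsilon$ holds because $P_\varepsilon(y)-y$ is a convex combination, with positive weights, of vectors $y_i-y$ of norm $<\varepsilon$.
\item $P_\varepsilon\circ\Psi$ is well defined on $E_\varepsilon$, since $E_\varepsilon\subseteq\mathscr{G}$ by convexity and $\Psi(\mathscr{G})\subseteq\mathscr{K}$.
\item Closedness of $\mathscr{G}$ is exactly what places the limit $x^*$ back in the domain of $\Psi$.
\end{itemize}

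For Step 2, the retraction argument you sketch is the cleanest option. Put a Euclidean inner product on the finite-dimensional span; all norms there are equivalent, so continuity is unaffected. Take a closed ball $B\supseteq E_\varepsilon$ and apply Brouwer to $P_\varepsilon\circ\Psi\circ r\colon B\to E_\varepsilon\subseteq B$, where $r$ is the nearest-point retraction. Any fixed point lies in $E_\varepsilon$, where $r$ is the identity.

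What your proof adds beyond the citation is a self-contained argument whose only external input is Brouwer's theorem. It also shows that boundedness of $\mathscr{G}$ is used only to make $\overline{\Psi(\mathscr{G})}$ compact. So the conclusion holds for any nonempty closed convex $\mathscr{G}$ whenever $\Psi(\mathscr{G})$ is relatively compact.
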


\begin{thm}(Darbo\cite{od5})\label{Td}
	Let $\mathscr{G}$ be a $\mathscr{NBCCS}$ of a Banach Space ${\mathscr{B}}.$ Suppose that $\Psi: \mathscr{G} \rightarrow \mathscr{G}$ is a continuous mapping such that 
	\[
	\mu(\Psi\mathscr{K})\leq \varpi\mu(\mathscr{K}),\; \mathscr{K}\subseteq \mathscr{G}.
	\]
	Where, $\varpi\in \left[ 0,1\right) $ is a constant. So, $\Psi$ has a fixed point.
\end{thm}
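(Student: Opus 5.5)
The plan is the classical iteration argument that reduces Darbo's theorem to Schauder's theorem (Theorem \ref{Ts}) by means of axiom (vii) of Definition \ref{dm}. Set $\mathscr{G}_{1}=\mathscr{G}$ and define recursively
\[
\mathscr{G}_{q+1}=\overline{Conv}\,(\Psi\mathscr{G}_{q}),\qquad q=1,2,\dots .
\]
Each $\mathscr{G}_{q}$ is then closed, convex and nonempty, and it is bounded because it lies in $\mathscr{G}$. I will first show by induction that the sets are nested, $\mathscr{G}_{q+1}\subseteq\mathscr{G}_{q}$, and that $\Psi\mathscr{G}_{q}\subseteq\mathscr{G}_{q}$.

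For the base case, $\Psi\mathscr{G}\subseteq\mathscr{G}$ and $\mathscr{G}$ is closed and convex, so $\mathscr{G}_{2}\subseteq\mathscr{G}_{1}$. For the inductive step, if $\mathscr{G}_{q+1}\subseteq\mathscr{G}_{q}$, then $\Psi\mathscr{G}_{q+1}\subseteq\Psi\mathscr{G}_{q}$, and taking closed convex hulls gives $\mathscr{G}_{q+2}\subseteq\mathscr{G}_{q+1}$. Invariance follows at once, since $\Psi\mathscr{G}_{q}\subseteq\mathscr{G}_{q+1}\subseteq\mathscr{G}_{q}$.

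Next I estimate the measures. Axioms (iv) and (v) together with the contraction hypothesis give
\[
\mu(\mathscr{G}_{q+1})=\mu(\Psi\mathscr{G}_{q})\leq\varpi\,\mu(\mathscr{G}_{q}),
\]
so $\mu(\mathscr{G}_{q})\leq\varpi^{q-1}\mu(\mathscr{G})$. Because $\varpi\in[0,1)$, this tends to $0$. Axiom (vii) then shows that $\mathscr{G}_{\infty}=\bigcap_{q}\mathscr{G}_{q}$ is nonempty, and the remark after Definition \ref{dm} gives $\mu(\mathscr{G}_{\infty})=0$, so $\mathscr{G}_{\infty}$ is relatively compact. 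Being an intersection of closed sets, it is closed and hence compact. It is convex as an intersection of convex sets, and bounded.

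Moreover $\Psi\mathscr{G}_{\infty}\subseteq\Psi\mathscr{G}_{q}\subseteq\mathscr{G}_{q+1}$ for every $q$, so $\Psi\mathscr{G}_{\infty}\subseteq\mathscr{G}_{\infty}$. The restriction of $\Psi$ to $\mathscr{G}_{\infty}$ is therefore a continuous self-map of a nonempty compact convex set. Its image lies in a compact set, so the restriction is compact. Theorem \ref{Ts} applied on $\mathscr{G}_{\infty}$ then yields a fixed point, which is also a fixed point of $\Psi$ on $\mathscr{G}$.

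The only delicate point is the equality $\mu(\overline{Conv}\,\mathscr{K})=\mu(\mathscr{K})$. This follows by applying (v) and then (iv); one checks that $\overline{Conv}\,\mathscr{K}=\overline{Conv\,\mathscr{K}}$, so the two axioms compose correctly. A small edge case also needs attention: if $\mu(\mathscr{G}_{q})=0$ already for some $q$, the argument still works, because the measures remain $0$ from then on.
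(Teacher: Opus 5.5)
Your proof is correct and is essentially the argument the paper relies on. The paper cites Darbo's theorem without proof, but recovers it as Corollary \ref{c71} from Theorem \ref{T7}, whose proof uses exactly your scheme: nested sets $\mathscr{C}_{k}=Conv(\Psi\mathscr{C}_{k-1})$, iteration of the contractive inequality to force $\mu(\mathscr{C}_{k})\to 0$, axiom (vii) of Definition \ref{dm} to get a nonempty invariant intersection in $\ker\mu$, and Schauder's theorem (Theorem \ref{Ts}) on that intersection. Your explicit use of the closed convex hull and your direct check that $\Psi\mathscr{G}_{\infty}\subseteq\mathscr{G}_{\infty}$ are, if anything, handled more carefully than in the paper's version.
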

\par To prove a generalized of Darbo's type theorem, the following related definitions are necessary:

\begin{defn}\cite{7Inzamamul}\label{d7}
	Denote by $\mathcal{Q}$ the class of all $Q : \mathbb{R}_{+} \rightarrow \mathbb{R}$ with the below axioms: \\
	$(a)$ $Q$\; is\; non-decreasing.\\
	$(b)$ $Q$ \;is\; continuous.\\
	$(c)$ $\lim\limits_{k \rightarrow \infty}Q(t_k)=0\Leftrightarrow \lim\limits_{k \rightarrow \infty}t_k=0$ for each sequence $\left\lbrace t_k\right\rbrace \subset (0,\infty) .$\\
	That is  $Q(t) = 0$ iff $t = 0.$
\end{defn}

\begin{defn}\cite{7Inzamamul}\label{d71}
	Let $\Delta$ be the family of all function (known as comparison function) $\delta : \mathbb{R}_{+} \rightarrow \mathbb{R}_{+}$ with the below axioms:\\
	$(a)$ $\delta$ is monotone increasing.\\
	$(b)$ $\lim\limits_{q \rightarrow \infty}\delta^{q}(z)=0, $ for all $z>0$.
\end{defn}

\begin{defn}(\cite{7d72})\label{d72}
	Denote by $\mathbb{E}$ the class of all  $\mathscr{E}: \mathbb{R}_{+}\times\mathbb{R}_{+} \rightarrow \mathbb{R}$ with the following properties:\\
		$a$ $\mathscr{E}(z_1,z_2)\geq \max \left\lbrace z_1,z_2\right\rbrace $ for $z_1,z_2 \geq 0.$\\
		$b$ $\mathscr{E}$ is a continuous.\\
		$c$ $\mathscr{E}(z_1+z_2,x_1+x_2) \leq \mathscr{E}(z_1,x_1)+\mathscr{E}(z_2,x_2).$\\
	e.g $\mathscr{E}(z_1,z_2) = z_1+z_2.$
\end{defn}

\section{Darbo Fixed point Results}
\begin{thm}\label{T7}
	Suppose that $\mathscr{G}$ is a $\mathscr{NBCCS}$ of a Banach space $\mathscr{B}$ and, the function $\Psi:\mathscr{G} \rightarrow \mathscr{G}$ is a continuous  with
	\begin{equation}\label{eq7}
		\mu(\Psi \mathscr{K})>0\implies Q\left\lbrace \mathscr{E}\left( \mu\left(\Psi \mathscr{K}\right) ,F \left( \mu\left(\Psi \mathscr{K}\right) \right)\right)\right\rbrace   \leq 
		\delta[Q\left\lbrace \mathscr{E}\left( \mu\left(\mathscr{K}\right) ,F \left( \mu\left(\mathscr{K}\right) \right)\right)\right\rbrace ] 
	\end{equation}
	where $\mathscr{K} \subset \mathscr{G}$ also $\mu$ is an arbitrary $\mathscr{MNC}$  and $ Q \in \mathcal{Q}, \;\delta \in \Delta\; $ and $\mathscr{E}\in\mathbb{E}.$ Also $ F : \mathbb{R}_{+} \rightarrow \mathbb{R}_{+}$ is non-decreasing continuous function.
	So that $\Psi$ has atleast one fixed point in $\mathscr{G}.$
\end{thm}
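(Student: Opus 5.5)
We follow the standard Darbo iteration scheme. Set $\mathscr{G}_0=\mathscr{G}$ and $\mathscr{G}_{n+1}=Conv(\Psi\mathscr{G}_n)$ for $n\ge 0$. By induction, $\Psi\mathscr{G}_0\subseteq\mathscr{G}_0$ gives $\mathscr{G}_1\subseteq\mathscr{G}_0$. If $\mathscr{G}_{n}\subseteq\mathscr{G}_{n-1}$, then $\Psi\mathscr{G}_n\subseteq\Psi\mathscr{G}_{n-1}$, so $\mathscr{G}_{n+1}\subseteq\mathscr{G}_n$. Thus $(\mathscr{G}_n)$ is a decreasing sequence of nonempty, bounded, closed and convex subsets of $\mathscr{G}$, each mapped into itself by $\Psi$.

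\emph{Trivial case.} Suppose $\mu(\mathscr{G}_N)=0$ for some $N$. By axiom (ii) of Definition \ref{dm}, $\mathscr{G}_N$ is compact. Hence $\Psi:\mathscr{G}_N\to\mathscr{G}_N$ is a continuous compact map on a $\mathscr{NBCCS}$, and Theorem \ref{Ts} gives a fixed point.

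\emph{Main case.} Assume $\mu(\mathscr{G}_n)>0$ for all $n$. By axiom (v), $\mu(\mathscr{G}_{n+1})=\mu(\Psi\mathscr{G}_n)>0$, so \eqref{eq7} applies with $\mathscr{K}=\mathscr{G}_n$ and gives
\[
Q\{\mathscr{E}(\mu(\mathscr{G}_{n+1}),F(\mu(\mathscr{G}_{n+1})))\}\le\delta[Q\{\mathscr{E}(\mu(\mathscr{G}_{n}),F(\mu(\mathscr{G}_{n})))\}].
\]
Write $a_n=Q\{\mathscr{E}(\mu(\mathscr{G}_n),F(\mu(\mathscr{G}_n)))\}$. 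Iterating with the monotonicity of $\delta$ gives $a_n\le\delta^n(a_0)$.

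There is a technical gap here. Since $Q$ maps into $\mathbb{R}$, $a_0$ could be negative, but $\delta$ is only defined on $\mathbb{R}_+$. The argument must therefore assume, as the hypothesis implicitly does, that the $Q$-values in \eqref{eq7} lie in $\mathbb{R}_+$. If $a_0=0$, then $\mathscr{E}(\mu(\mathscr{G}_0),F(\mu(\mathscr{G}_0)))=0$. Since $\mathscr{E}(z_1,z_2)\ge\max\{z_1,z_2\}$, this forces $\mu(\mathscr{G}_0)=0$, which falls under the trivial case. Otherwise $a_0>0$, and Definition \ref{d71}(b) gives $\delta^n(a_0)\to0$.

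Set $t_n=\mathscr{E}(\mu(\mathscr{G}_n),F(\mu(\mathscr{G}_n)))$. Then $t_n\ge\mu(\mathscr{G}_n)>0$ and $Q(t_n)=a_n$, so $\limsup_n Q(t_n)\le0$. We want to conclude $t_n\to0$ from property (c) of Definition \ref{d7}, and this is the main obstacle.

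Property (c) needs $Q(t_n)\to0$, not just $\limsup_n Q(t_n)\le0$. To close this, note that $(\mu(\mathscr{G}_n))$ is nonincreasing by axiom (iii). Since $F$ is nondecreasing and $\mathscr{E}$ is subadditive with $\mathscr{E}\ge\max$, we expect $t_n$ to be nonincreasing. (This is automatic for the model case $\mathscr{E}(z_1,z_2)=z_1+z_2$. In general, it can be bypassed by passing to the limit $\mu(\mathscr{G}_n)\to r\ge0$ and using continuity of $\mathscr{E}$, $F$ and $Q$.)

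With that, $a_n=Q(t_n)$ is nonincreasing and bounded below by $Q(0)=0$, since $Q$ is nondecreasing and $t_n\ge0$. Combined with $a_n\le\delta^n(a_0)\to0$, this gives $Q(t_n)\to0$, so property (c) yields $t_n\to0$, and therefore $\mu(\mathscr{G}_n)\le t_n\to0$.

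Finally, the sets $\mathscr{G}_n$ are closed and nested with $\mu(\mathscr{G}_n)\to0$. Axiom (vii) gives $\mathscr{G}_\infty=\bigcap_n\mathscr{G}_n\neq\emptyset$ with $\mu(\mathscr{G}_\infty)=0$, so $\mathscr{G}_\infty$ is compact. It is convex as an intersection of convex sets, and it is $\Psi$-invariant because $\Psi\mathscr{G}_\infty\subseteq\Psi\mathscr{G}_n\subseteq\mathscr{G}_{n+1}$ for every $n$. Applying Theorem \ref{Ts} to $\Psi|_{\mathscr{G}_\infty}$ gives the fixed point.
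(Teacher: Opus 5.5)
Your proposal is correct and is essentially the paper's proof. It uses the same Darbo iteration $\mathscr{G}_{n+1}=Conv(\Psi\mathscr{G}_n)$, Schauder in the degenerate case, iteration of \eqref{eq7} to get $a_n\le\delta^n(a_0)\to0$, then property (c), $\mathscr{E}\ge\max$, axiom (vii) and Schauder on $\bigcap_n\mathscr{G}_n$. Your case split on $\mu(\mathscr{G}_N)=0$ is slightly cleaner than the paper's, since it guarantees both the premise $\mu(\Psi\mathscr{G}_n)>0$ of \eqref{eq7} and $t_n\in(0,\infty)$ as required in (c).

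The two ``gaps'' you flag are not real, and the monotonicity detour should be dropped. Property (c) with $t_k=1/k$, together with continuity, gives $Q(0)=0$, hence $Q\ge0$ on $\mathbb{R}_+$. So every $a_n\ge0$, and $0\le a_n\le\delta^n(a_0)\to0$ gives $Q(t_n)\to0$ directly. By contrast, your expectation that $t_n$ is nonincreasing does not follow from the axioms of $\mathbb{E}$, which impose no monotonicity on $\mathscr{E}$.
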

\begin{proof}
	Assume that a sequence $\left\lbrace \mathscr{C}_{k}\right\rbrace _{k=1}^{\infty}$ with $\mathscr{C}_{0}=\mathscr{C}$
	and $\mathscr{C}_{k}=Conv(\Psi\mathscr{C}_{k-1})$ for $k\geq 1.$  We have, $\Psi\mathscr{C}_{0}=\Psi\mathscr{C}\subseteq\mathscr{C}_{0}=\mathscr{C}_{0},$ 
	$\mathscr{C}_{1}=Conv(\Psi\mathscr{C}_{0})\subseteq\mathscr{C}=\mathscr{C}_{0};  $
	therefore, by proceeding in the same manner gives 
	$\mathscr{C}_{0} \supseteq\mathscr{C}_{1} \supseteq \mathscr{C}_{2}\supseteq \mathscr{C}_{3}\supseteq \ldots \supseteq \mathscr{C}_{k}\supseteq \mathscr{C}_{k+1}\supseteq \ldots.$

   	\par If there exists an integer ${L}\geq 0$ such that $\mathscr{E}\left(\mu( \mathscr{C}_{L}),F(\mu( \mathscr{C}_{L}))\right) =0.$ which gives
   $\mu( \mathscr{C}_{L})=0.$ So $\mathscr{C}_{L}$ is compact set with $\Psi\mathscr{C}_{L}\subseteq Conv(\Psi\mathscr{C}_{L})=\mathscr{C}_{L+1}.$ Thus,  theorem \ref{Ts} (Schauder's) provides that
   $\Psi$ have a fixed point in $\mathscr{C}.$

	\par Moreover, if $\mathscr{E}\left(\mu( \mathscr{C}_{k}),F(\mu( \mathscr{C}_{k}))\right)>0,\; k \in \mathbb{N}.$
	\par By the equation \ref{eq7}, we now obtain 
	\begin{align*}
		& Q\left\lbrace  \mathscr{E}\left(\mu( \mathscr{C}_{k+1}),F(\mu( \mathscr{C}_{k+1}))\right)\right\rbrace \\
		& = Q\left\lbrace  \mathscr{E}\left(\mu( Conv(\Psi\mathscr{C}_{k})),F(\mu( Conv(\Psi\mathscr{C}_{k})))\right)\right\rbrace\\
		&= Q\left\lbrace  \mathscr{E}\left(\mu( \Psi\mathscr{C}_{k}),F(\mu( \Psi\mathscr{C}_{k}))\right)\right\rbrace\\
		& \leq \delta[Q\left\lbrace  \mathscr{E}\left(\mu( \mathscr{C}_{k}),F(\mu( \mathscr{C}_{k}))\right)\right\rbrace ]\\
		& \leq \delta[Q\left\lbrace  \mathscr{E}\left(\mu( Conv(\Psi\mathscr{C}_{k-1})),F(\mu( Conv(\Psi\mathscr{C}_{k-1})))\right)\right\rbrace ]\\
		& = \delta[Q\left\lbrace  \mathscr{E}\left(\mu( \Psi\mathscr{C}_{k-1}),F(\mu( \Psi\mathscr{C}_{k-1}))\right)\right\rbrace ]\\
		& \leq \delta^{2}[Q\left\lbrace  \mathscr{E}\left(\mu(\mathscr{C}_{k-1}),F(\mu( \mathscr{C}_{k-1}))\right)\right\rbrace ]\\
		& \dots\dots\\
		& \leq \delta^{k+1}[Q\left\lbrace  \mathscr{E}\left(\mu(\mathscr{C}_{0}),F(\mu( \mathscr{C}_{0}))\right)\right\rbrace ].
	\end{align*}
    Thus, we have
    \[
    Q\left\lbrace  \mathscr{E}\left(\mu( \mathscr{C}_{k+1}),F(\mu( \mathscr{C}_{k+1}))\right)\right\rbrace\leq \delta^{k+1}[Q\left\lbrace  \mathscr{E}\left(\mu(\mathscr{C}_{0}),F(\mu( \mathscr{C}_{0}))\right)\right\rbrace ].
    \]
    As $k \rightarrow \infty$ and applying definition \ref{d71}, we obtain
	\[\lim\limits_{k \rightarrow \infty} Q\left\lbrace  \mathscr{E}\left(\mu( \mathscr{C}_{k+1}),F(\mu( \mathscr{C}_{k+1}))\right)\right\rbrace=0.\]
	By using (\ref{d7}), we get
	\[\lim\limits_{k \rightarrow \infty}\mathscr{E}\left(\mu( \mathscr{C}_{k+1}),F(\mu( \mathscr{C}_{k+1}))\right) =0.\]
	Also, using by $(a)$ of definition \ref{d72}, we have
	\[\max\left\lbrace \mu( \mathscr{C}_{k+1}),F(\mu( \mathscr{C}_{k+1}))\right\rbrace\leq \mathscr{E}\left(\mu( \mathscr{C}_{k+1}),F(\mu( \mathscr{C}_{k+1}))\right).\]
	As $k \rightarrow \infty,$ this gives
	\[
	0\leq \max\left\lbrace \lim\limits_{k \rightarrow \infty}\mu( \mathscr{C}_{k+1}),\lim\limits_{k \rightarrow \infty}F(\mu( \mathscr{C}_{k+1}))\right\rbrace\leq 0.
	\]
	Since, $\mathbb{F}\geq 0,$ we get 
	\[
	\lim\limits_{k \rightarrow \infty}\mu( \mathscr{C}_{k+1})=0 \; and\;\lim\limits_{k \rightarrow \infty}F(\mu( \mathscr{C}_{k+1})) =0.
	\]
	Again, since $\mathscr{C}_{k}\supseteq \mathscr{C}_{k+1}$ and $\mathscr{C}_{k}\supseteq \Psi\mathscr{C}_{k}$ forall $k\geq 1.$\\
	 So from \ref{dm}, it follows that $\mathscr{C}_{\infty}=\bigcap_{k=1}^{\infty}\mathscr{C}_{k}$ is non-empty, convex,  closed set, invariant under the function $\Psi$ also belong to $ker\mu .$
	\\ Thus, from Schauder’s FPT (theorem \ref{Ts}) gives the required result. 
	Hence, the completed proof.
\end{proof}
\begin{thm}\label{T71}
Suppose that $\mathscr{G}$ is a $\mathscr{NBCCS}$ of a Banach space $\mathscr{B}$ and, the function $\Psi:\mathscr{G} \rightarrow \mathscr{G}$ is a continuous  with
\begin{equation}
	\mu(\Psi \mathscr{K})>0\implies Q\left\lbrace \mathscr{E}\left( \mu\left(\Psi \mathscr{K}\right) ,F \left( \mu\left(\Psi \mathscr{K}\right) \right)\right)\right\rbrace   \leq 
	\lambda Q\left\lbrace \mathscr{E}\left( \mu\left(\mathscr{K}\right) ,F \left( \mu\left(\mathscr{K}\right) \right)\right)\right\rbrace 
\end{equation}
where $\mathscr{K} \subset \mathscr{G}$ and $\mu$ is an arbitrary $\mathscr{MNC}$  and $ Q \in \mathcal{Q} $ and $\mathscr{E}\in\mathbb{E}.$ Also $ F : \mathbb{R}_{+} \rightarrow \mathbb{R}_{+}$ is non-decreasing continuous mapping.
So that $\Psi$ has atleast one fixed point in $\mathscr{G}.$
\end{thm}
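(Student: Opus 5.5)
The statement does not say where $\lambda$ lives; the intended reading is clearly $\lambda\in[0,1)$, and I will assume this. With that reading, the plan is to reduce the result to Theorem \ref{T7}. Put $\delta(t)=\lambda t$ for $t\geq 0$. Then $\delta:\mathbb{R}_{+}\to\mathbb{R}_{+}$ is non-decreasing (strictly increasing if $\lambda>0$). It also satisfies $\delta^{q}(z)=\lambda^{q}z\to 0$ as $q\to\infty$ for every $z>0$. So $\delta\in\Delta$ in the sense of Definition \ref{d71}.

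With this $\delta$, the hypothesis of the present theorem is literally condition (\ref{eq7}):
\[
\mu(\Psi\mathscr{K})>0 \implies Q\{\mathscr{E}(\mu(\Psi\mathscr{K}),F(\mu(\Psi\mathscr{K})))\}\leq \delta[Q\{\mathscr{E}(\mu(\mathscr{K}),F(\mu(\mathscr{K})))\}].
\]
All remaining hypotheses of Theorem \ref{T7} hold unchanged: $\mathscr{G}$ is a $\mathscr{NBCCS}$, $\Psi$ is continuous, $Q\in\mathcal{Q}$, $\mathscr{E}\in\mathbb{E}$, and $F$ is continuous and non-decreasing. Theorem \ref{T7} then gives a fixed point of $\Psi$ in $\mathscr{G}$.

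One technical point needs care. For $\delta$ to map into $\mathbb{R}_{+}$ and respect the comparison, the argument of $\delta$ must be non-negative. Since $\mathscr{E}(z_1,z_2)\geq\max\{z_1,z_2\}\geq 0$, $Q$ is non-decreasing, and $Q(0)=0$, we get $Q\{\mathscr{E}(\cdot,\cdot)\}\geq 0$. So the quantities fed into $\delta$ do lie in $\mathbb{R}_{+}$.

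If a self-contained argument is preferred, I would instead repeat the iteration from the proof of Theorem \ref{T7}. Take $\mathscr{C}_0=\mathscr{G}$ and $\mathscr{C}_{k}=Conv(\Psi\mathscr{C}_{k-1})$, which gives a decreasing sequence of closed convex sets. If some $\mu(\mathscr{C}_L)=0$, Schauder's theorem (Theorem \ref{Ts}) applies directly. Otherwise, using axiom (v) of Definition \ref{dm} at each step, one obtains
\[
Q\{\mathscr{E}(\mu(\mathscr{C}_{k}),F(\mu(\mathscr{C}_{k})))\}\leq\lambda^{k}Q\{\mathscr{E}(\mu(\mathscr{C}_0),F(\mu(\mathscr{C}_0)))\}\to0.
\]
Property (c) of Definition \ref{d7} then gives $\mathscr{E}(\cdot,\cdot)\to0$. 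Property (a) of Definition \ref{d72} gives $\mu(\mathscr{C}_k)\to0$. Axiom (vii) of Definition \ref{dm} now shows that $\mathscr{C}_\infty=\bigcap_{k}\mathscr{C}_k$ is nonempty, closed, convex, $\Psi$-invariant and compact. Schauder's theorem finishes the proof.

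The main obstacle is interpretive rather than technical. If $\lambda\geq1$ were allowed, the statement would fail, for example with $\lambda=1$ and a non-condensing map, so the restriction $\lambda\in[0,1)$ must be imposed explicitly.
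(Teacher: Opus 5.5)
Your proposal is correct and is essentially the paper's own argument. The paper proves this theorem in one line by taking $\delta(z)=\lambda z$ with $0<\lambda<1$ in Theorem~\ref{T7}; your checks that this $\delta$ lies in $\Delta$ and is only evaluated at nonnegative arguments make that reduction explicit, your optional self-contained iteration simply reruns the proof of Theorem~\ref{T7} with $\delta^{k}(z)=\lambda^{k}z$, and your remark that $\lambda<1$ must be assumed is right, since the paper imposes it only inside the proof.
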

\begin{proof}
	Taking $\delta(z)=\lambda z, \; 0<\lambda<1,\; z>0$ in the Theorem \; \ref{T7}. We achieve the aforementioned theorem.
\end{proof}
\begin{thm}\label{T72}
	Suppose that $\mathscr{G}$ is a $\mathscr{NBCCS}$ of a Banach space $\mathscr{B}$ and, the function $\Psi:\mathscr{G} \rightarrow \mathscr{G}$ is a continuous  with
	\begin{equation}
		\mu(\Psi \mathscr{K})>0\implies  \mathscr{E}\left( \mu\left(\Psi \mathscr{K}\right) ,F \left( \mu\left(\Psi \mathscr{K}\right) \right)\right)   \leq 
		\lambda \mathscr{E}\left( \mu\left(\mathscr{K}\right) ,F \left( \mu\left(\mathscr{K}\right) \right)\right)
	\end{equation}
	where $\mathscr{K} \subset \mathscr{G}$ and $\mu$ is an arbitrary $\mathscr{MNC}$  and $\mathscr{E}\in\mathbb{E}.$ Also $ F : \mathbb{R}_{+} \rightarrow \mathbb{R}_{+}$ is non-decreasing continuous mapping.
	So that $\Psi$ has atleast one fixed point in $\mathscr{G}.$
\end{thm}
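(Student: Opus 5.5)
This is the special case of Theorem \ref{T71} obtained by taking $Q$ to be the identity, so the plan is to check that this choice is admissible and then invoke Theorem \ref{T71}. (Here $\lambda\in[0,1)$, as in Theorem \ref{T71}.) Put $Q(t)=t$ for $t\in\mathbb{R}_{+}$ and verify the axioms of Definition \ref{d7}:
\begin{itemize}
\item $Q$ is non-decreasing and continuous;
\item for any sequence $\{t_k\}\subset(0,\infty)$ we have $Q(t_k)=t_k$, so $\lim_{k\to\infty}Q(t_k)=0$ if and only if $\lim_{k\to\infty}t_k=0$;
\item $Q(t)=0$ exactly when $t=0$.
\end{itemize}
Hence $Q\in\mathcal{Q}$. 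With this $Q$, the hypothesis of Theorem \ref{T72} is literally the hypothesis of Theorem \ref{T71}. The latter therefore gives a fixed point of $\Psi$ in $\mathscr{G}$.

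To be safe, I would also indicate that the chain of reductions really holds. Theorem \ref{T71} follows from Theorem \ref{T7} by taking $\delta(z)=\lambda z$. This $\delta$ lies in $\Delta$: it is monotone, and $\delta^{q}(z)=\lambda^{q}z\to 0$. In the degenerate case $\lambda=0$ it is only non-decreasing, but the iteration argument in Theorem \ref{T7} uses only monotonicity together with $\delta^q(z)\to0$, so it still goes through. If anything were doubtful, I would instead repeat the iteration directly, which is elementary here.

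For the direct argument, set $\mathscr{C}_0=\mathscr{G}$ and $\mathscr{C}_{k}=Conv(\Psi\mathscr{C}_{k-1})$ for $k\ge 1$. These sets form a decreasing chain of closed convex sets. Write $e_k=\mathscr{E}(\mu(\mathscr{C}_k),F(\mu(\mathscr{C}_k)))$. By axiom (v) of Definition \ref{dm} we have $\mu(\mathscr{C}_{k+1})=\mu(\Psi\mathscr{C}_k)$, and the hypothesis then gives $e_{k+1}\le\lambda e_k$, hence $e_k\le\lambda^k e_0\to0$.

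There are two cases.
\begin{itemize}
\item If $\mu(\Psi\mathscr{C}_k)=0$ at some stage, then $\mu(\mathscr{C}_{k+1})=0$ directly. So $\mathscr{C}_{k+1}$ is a compact convex set invariant under $\Psi$, and Theorem \ref{Ts} applies.
\item Otherwise, property (a) of Definition \ref{d72} gives $\mu(\mathscr{C}_k)\le e_k\to0$. Axiom (vii) then shows that $\mathscr{C}_\infty=\bigcap_k\mathscr{C}_k$ is nonempty, closed, convex, compact and $\Psi$-invariant, and Schauder's theorem finishes the proof.
\end{itemize}

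The only point needing care is this degenerate branch where $\mu(\Psi\mathscr{K})=0$ and the contractive hypothesis gives no information. It is handled by the compactness case above. Everything else is routine.
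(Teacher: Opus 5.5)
Your proposal is correct and matches the paper's proof: the paper obtains Theorem~\ref{T72} simply by taking $Q(z)=z$ in Theorem~\ref{T71}, which is your main argument. Your backup direct iteration is also sound. It is slightly more careful than the paper's proof of Theorem~\ref{T7}, because you split cases on whether $\mu(\Psi\mathscr{C}_k)=0$, which is exactly when the hypothesis is vacuous, whereas the paper splits on whether $\mathscr{E}(\mu(\mathscr{C}_k),F(\mu(\mathscr{C}_k)))=0$.
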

\begin{proof}
	Taking $Q(z)=z$ in the theorem \ref{T71} we achieve , the theorem \ref{T72}.
\end{proof}

\begin{cor}\label{c7}
	Suppose that $\mathscr{G}$ is a $\mathscr{NBCCS}$ of a Banach space $\mathscr{B}$ and, the function $\Psi:\mathscr{G} \rightarrow \mathscr{G}$ is a continuous  with
	\begin{equation}
		\mu(\Psi \mathscr{K})>0\implies \mu(\Psi \mathscr{K}) +F \left( \mu\left(\Psi \mathscr{K}\right) \right)    \leq 
		\lambda [\mu(\mathscr{K}) +F \left( \mu\left(\mathscr{K}\right) \right)]
	\end{equation}
	where $\mathscr{K} \subset \mathscr{G}$ and $\mu$ is an arbitrary $\mathscr{MNC.}$ Also $ F : \mathbb{R}_{+} \rightarrow \mathbb{R}_{+}$ is non-decreasing continuous mapping.
	So that $\Psi$ has atleast one fixed point in $\mathscr{G}.$
\end{cor}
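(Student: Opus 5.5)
The plan is to derive Corollary \ref{c7} directly from Theorem \ref{T72} by choosing
\[
\mathscr{E}(z_1,z_2)=z_1+z_2 .
\]
With this choice, the hypothesis of Theorem \ref{T72} becomes word for word the displayed inequality of the corollary. The constant $\lambda$ is understood, as in Theorem \ref{T71}, to lie in $(0,1)$. Everything else in the two statements is identical: the same $\mathscr{G}$, the same continuous self-map $\Psi$, an arbitrary $\mathscr{MNC}$ $\mu$, and a non-decreasing continuous $F:\mathbb{R}_{+}\to\mathbb{R}_{+}$. So the only real work is to check that this $\mathscr{E}$ belongs to the class $\mathbb{E}$ of Definition \ref{d72}.

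The three conditions are checked in order.
\begin{itemize}
\item[(a)] For $z_1,z_2\geq 0$ we have $z_1+z_2\geq \max\{z_1,z_2\}$, since the omitted term is nonnegative.
\item[(b)] Addition is continuous on $\mathbb{R}_{+}\times\mathbb{R}_{+}$.
\item[(c)] $\mathscr{E}(z_1+z_2,x_1+x_2)=z_1+z_2+x_1+x_2=\mathscr{E}(z_1,x_1)+\mathscr{E}(z_2,x_2)$, so subadditivity holds with equality.
\end{itemize}
Definition \ref{d72} itself lists this function as the example, so there is no obstacle here.

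Substituting, $\mathscr{E}(\mu(\Psi\mathscr{K}),F(\mu(\Psi\mathscr{K})))=\mu(\Psi\mathscr{K})+F(\mu(\Psi\mathscr{K}))$, and likewise for $\mathscr{K}$. Hence the corollary's hypothesis is exactly the hypothesis of Theorem \ref{T72}, and that theorem yields a fixed point of $\Psi$ in $\mathscr{G}$.

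The only point needing care is the chain of reductions behind Theorem \ref{T72}. It rests on Theorem \ref{T71} with $Q(z)=z$, and Theorem \ref{T71} in turn rests on Theorem \ref{T7} with $\delta(z)=\lambda z$. I would briefly confirm that these choices are admissible. The map $Q(z)=z$ is non-decreasing, continuous, and satisfies $Q(t_k)\to0\iff t_k\to0$. The map $\delta(z)=\lambda z$ with $0<\lambda<1$ is increasing, and $\delta^q(z)=\lambda^q z\to 0$. Both facts are immediate, so the corollary follows by direct specialization.
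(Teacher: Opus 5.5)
Your proposal is correct and follows the paper's proof: the paper obtains Corollary~\ref{c7} by setting $\mathscr{E}(z_1,z_2)=z_1+z_2$ in Theorem~\ref{T72}. Your checks that this $\mathscr{E}$ lies in $\mathbb{E}$, and that $Q(z)=z$ and $\delta(z)=\lambda z$ with $0<\lambda<1$ are admissible, spell out what the paper leaves implicit.
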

\begin{proof}
	Setting $\mathscr{E}(z_1,z_2) = z_1+z_2$ in the theorem \ref{T72}. We achieve the aforementioned corollary.
\end{proof}
\begin{cor}\label{c71}
	Suppose that $\mathscr{G}$ is a $\mathscr{NBCCS}$ of a Banach space $\mathscr{B}$ and, the function $\Psi:\mathscr{G} \rightarrow \mathscr{G}$ is a continuous  with
	\begin{equation}
		\mu(\Psi \mathscr{K})>0\implies \mu(\Psi \mathscr{K})   \leq 
		\lambda \mu( \mathscr{K}) 
	\end{equation}
	where $\mathscr{K} \subset \mathscr{G}$ and $\mu$ is an arbitrary $\mathscr{MNC.}$ 
	So that $\Psi$ has atleast one fixed point in $\mathscr{G}.$
\end{cor}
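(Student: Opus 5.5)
The plan is to obtain Corollary \ref{c71} as a specialization of Corollary \ref{c7}, in the same way the earlier results were derived from Theorem \ref{T7}. Throughout, $\lambda\in[0,1)$ is understood as in Theorem \ref{T71}.

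The natural choice is $F\equiv 0$ on $\mathbb{R}_{+}$. This $F$ is trivially non-decreasing and continuous and maps $\mathbb{R}_{+}$ into $\mathbb{R}_{+}$, so it is admissible in Corollary \ref{c7}. With this choice the hypothesis of Corollary \ref{c7} reads
\[
\mu(\Psi\mathscr{K})>0\implies \mu(\Psi\mathscr{K})+0\leq \lambda\,[\mu(\mathscr{K})+0],
\]
which is exactly the hypothesis of Corollary \ref{c71}. The remaining assumptions ($\mathscr{G}$ a $\mathscr{NBCCS}$, $\Psi$ continuous with $\Psi\mathscr{G}\subseteq\mathscr{G}$, $\mu$ an arbitrary $\mathscr{MNC}$) carry over verbatim, so Corollary \ref{c7} gives a fixed point of $\Psi$ in $\mathscr{G}$.

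There is no real obstacle; the only point to check is that the reduction does not quietly need $F$ to be strictly positive or $\mathscr{E}(\mu,F(\mu))>0$. Tracing back through Theorem \ref{T72} and Theorem \ref{T7} with $\mathscr{E}(z_1,z_2)=z_1+z_2$, $Q(z)=z$, $\delta(z)=\lambda z$ and $F\equiv0$, the quantity $\mathscr{E}(\mu(\mathscr{C}_k),F(\mu(\mathscr{C}_k)))$ reduces to $\mu(\mathscr{C}_k)$, and the proof splits into the same two cases. If $\mu(\mathscr{C}_L)=0$ for some $L$, Schauder's theorem applies on the compact convex set $\mathscr{C}_L$. Otherwise $\mu(\mathscr{C}_k)\leq\lambda^{k}\mu(\mathscr{C}_0)\to0$, and axiom (vii) gives a nonempty, closed, convex, $\Psi$-invariant set $\mathscr{C}_\infty$ in $\ker\mu$, to which Schauder's theorem again applies. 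So the specialization is legitimate.

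One subtlety: the implication is assumed only when $\mu(\Psi\mathscr{K})>0$. When $\mu(\Psi\mathscr{C}_k)=0$, we get $\mu(\mathscr{C}_{k+1})=\mu(\Psi\mathscr{C}_k)=0$ by axiom (v), which is the first case. So the iterative estimate is only ever applied when its premise holds.
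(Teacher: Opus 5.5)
Your proposal is correct and follows the paper's proof: the paper also obtains Corollary \ref{c71} by setting $F\equiv 0$, with $0<\lambda<1$, in Corollary \ref{c7}. Your extra trace back through Theorems \ref{T72}, \ref{T71} and \ref{T7} is a sound check that the paper omits. In particular, you note that the contractive inequality is only invoked when $\mu(\Psi\mathscr{C}_k)=\mu(\mathscr{C}_{k+1})>0$.
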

\begin{proof}
	 Putting $\mathbb{F}(z) =0, \; 0<\lambda<1$ in the Corollary \ref{c7}. We get, Darbo $FPT.$
\end{proof}

\section{Coupled fixed point Results}
\begin{defn}(\cite{7d73}) \label{d73}
	An element $(\mathscr{C},\mathscr{D} ) \in \mathscr{H} \times \mathscr{H}$ is called a coupled fixed point$(\mathscr{CFP})$ of a mapping $\Psi :\mathscr{H} \times \mathscr{H} \rightarrow \mathscr{H} $ if $\Psi (\mathscr{C},\mathscr{D} ) =  \mathscr{C}$ and $\Psi (\mathscr{D} ,\mathscr{C}) =  \mathscr{D}$.
\end{defn}
\begin{thm}(\cite{Banas1})\label{T73}
	Suppose $\mu_1, \mu_2,\dots , \mu_n$ is the $\mathscr{MNC}$  in $\mathscr{B}_1, \mathscr{B}_2,\dots , \mathscr{B}_n$	respectively. Moreover, suppose the function $\mathscr{L}: \mathbb{R}^{n}_{+} \rightarrow \mathbb{R}_{+}$ is convex also $\mathscr{J} (h_1,h_2,\dots , h_n) = 0$ $\Leftrightarrow$ $h_{t} = 0$ for $t = 1, 2,\dots,n$ then $\mu(\mathscr{L}) = \mathscr{J}(\mu_1(\mathscr{L}_1), \mu_2(\mathscr{L}_2), \dots , \mu_n(\mathscr{L}_n))$
	define a $\mathscr{MNC}$ in  $\mathscr{B}_1, \mathscr{B}_2,\dots , \mathscr{B}_n$, where $\mathscr{L}_t$ denote the natural projections of $\mathscr{L}$ into $\mathscr{B}_t$	for $t = 1, 2,3,\dots , n$.
\end{thm}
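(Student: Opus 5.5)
We interpret Theorem \ref{T73} in the standard way. On the product space $\mathscr{B}=\mathscr{B}_1\times\cdots\times\mathscr{B}_n$ we use the norm $\|(b_1,\dots,b_n)\|=\sum_t\|b_t\|_t$. For a bounded $\mathscr{L}\subseteq\mathscr{B}$ we set $\mu(\mathscr{L})=\mathscr{J}(\mu_1(\mathscr{L}_1),\dots,\mu_n(\mathscr{L}_n))$, where $\mathscr{J}=\mathscr{L}$ is the convex function. Besides convexity and the vanishing condition, we assume that $\mathscr{J}$ is nondecreasing in each variable; without monotonicity axiom (iii) can fail, so this is the standing hypothesis in \cite{Banas1}. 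The plan is to check axioms (i)--(vii) of Definition \ref{dm} one at a time. Throughout we use two facts: the projections $\pi_t$ are linear and continuous, and every $\mu_t$ satisfies these axioms in $\mathscr{B}_t$.

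\textbf{Axioms (i)--(ii).} If $\mathscr{L}$ is bounded, each $\mathscr{L}_t=\pi_t(\mathscr{L})$ is bounded, so $\mu$ is well defined. Suppose $\mu(\mathscr{L})=0$. The vanishing condition on $\mathscr{J}$ gives $\mu_t(\mathscr{L}_t)=0$ for every $t$, so each $\mathscr{L}_t$ is relatively compact. Since $\mathscr{L}\subseteq\mathscr{L}_1\times\cdots\times\mathscr{L}_n$ and a finite product of relatively compact sets is relatively compact, $\mathscr{L}$ is relatively compact. For nonemptiness of $\ker\mu$, pick $\mathscr{K}_t\in\ker\mu_t$ for each $t$. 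Then $\mathscr{K}_1\times\cdots\times\mathscr{K}_n$ has projections $\mathscr{K}_t$, so $\mu$ of this product is $\mathscr{J}(0,\dots,0)=0$.

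\textbf{Axioms (iii)--(vi).} For (iii), $\mathscr{L}\subseteq\mathscr{M}$ gives $\mathscr{L}_t\subseteq\mathscr{M}_t$, and monotonicity of each $\mu_t$ and of $\mathscr{J}$ gives $\mu(\mathscr{L})\le\mu(\mathscr{M})$. For (iv), continuity of $\pi_t$ gives $\mathscr{L}_t\subseteq\pi_t(\bar{\mathscr{L}})\subseteq\overline{\mathscr{L}_t}$, and axiom (iv) for $\mu_t$ yields equality. For (v), linearity gives $\pi_t(Conv\,\mathscr{L})=Conv(\mathscr{L}_t)$, the closure being handled exactly as in (iv). For (vi), $\pi_t(\varpi\mathscr{L}+(1-\varpi)\mathscr{M})=\varpi\mathscr{L}_t+(1-\varpi)\mathscr{M}_t$. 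Axiom (vi) for $\mu_t$ bounds each coordinate, monotonicity of $\mathscr{J}$ lets us pass that bound inside $\mathscr{J}$, and convexity of $\mathscr{J}$ then gives the required inequality. This is the only place convexity is used.

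\textbf{Axiom (vii), the main obstacle.} Let $\mathscr{L}^q$ be a decreasing sequence of closed bounded sets with $\mu(\mathscr{L}^q)\to0$. The naive move is to apply (vii) for each $\mu_t$ to the closures $\overline{\mathscr{L}^q_t}$ and build a point of $\bigcap_q\mathscr{L}^q$ from the coordinate limits. That fails, because the intersection of products is not the product of the intersections. Instead I would argue as follows.

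First, show $\mu_t(\mathscr{L}^q_t)\to0$ for each $t$. Suppose not; then along a subsequence some coordinate stays above $\varepsilon>0$. By monotonicity of $\mathscr{J}$, $\mu(\mathscr{L}^q)\ge\mathscr{J}(0,\dots,\varepsilon,\dots,0)>0$ on that subsequence, contradicting $\mu(\mathscr{L}^q)\to0$.

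Next, pick $x^q\in\mathscr{L}^q$. For $p\ge q$ each coordinate $x^p_t$ lies in $\mathscr{L}^q_t$, and $\mu_t(\mathscr{L}^q_t)\to0$. The standard argument behind (vii) therefore shows that $\{x^p_t\}$ has a convergent subsequence; concretely, $\bigcap_q\overline{\mathscr{L}^q_t}$ is compact and a Cantor-type diagonal argument applies. Passing to a common subsequence over $t=1,\dots,n$, we get $x^{p_k}\to x$ in $\mathscr{B}$. Since $x^{p_k}\in\mathscr{L}^q$ for all $k$ large and $\mathscr{L}^q$ is closed, $x\in\mathscr{L}^q$ for every $q$. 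Hence $x\in\bigcap_q\mathscr{L}^q$, which is therefore nonempty.

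The delicate point is extracting the convergent subsequence in each coordinate from $\mu_t\to0$ using only the axioms. If that route gets stuck, I would instead assume, as is done in practice, that each $\mu_t$ is the Hausdorff measure. Then $\chi_t(\{x^p_t:p\ge q\})\to0$ forces precompactness directly, and the argument above goes through.
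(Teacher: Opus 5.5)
The paper gives no proof of this statement; it only cites \cite{Banas1}, and it states the result without monotonicity of $\mathscr{J}$. You are right that monotonicity is needed. Take $\mathscr{J}(h_1,h_2)=|h_1-h_2|+\frac{1}{2}(h_1+h_2)$, which is convex and vanishes only at the origin, with $\mu_1=\mu_2$ the Hausdorff measure on $\ell^2$ and $A$ the closed unit ball. Then $\{0\}\times A\subseteq A\times A$, yet these sets receive the values $\frac{3}{2}>1$, so (iii) fails. Your checks of (i)--(vi) under the monotonicity hypothesis are correct.

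The genuine gap is in (vii), at exactly the step you call delicate. The claim that $(x^p_t)_p$ has a convergent subsequence is asserted, not proved. Compactness of $K_t=\bigcap_q\overline{\mathscr{L}^q_t}$ does not give it, because nothing in your argument keeps the $x^p_t$ close to $K_t$. For example, the closed decreasing sets $\{e_p:p\ge q\}\cup\{0\}$ in $\ell^2$ have compact intersection $\{0\}$ but contain the separated sequence $e_p$. A diagonal argument cannot manufacture compactness that is not there. Falling back on the Hausdorff measure does not prove the statement either, since the theorem, and its use in Theorem \ref{TT7}, concern an arbitrary $\mathscr{MNC}$.

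The missing idea is a second application of axiom (vii) for $\mu_t$, to auxiliary sets built from the sequence itself. Suppose $(x^p_t)_p$ had no convergent subsequence. Since $\mathscr{B}_t$ is complete, it has no Cauchy subsequence, so it contains an $\varepsilon$-separated subsequence $(x^{p_i}_t)_i$ for some $\varepsilon>0$. The tails $Z_q=\{x^{p_i}_t:i\ge q\}$ are nonempty, bounded and decreasing. They are closed, because an $\varepsilon$-separated set has no nontrivial convergent sequences. By (iii), $\mu_t(Z_q)\le\mu_t(\mathscr{L}^{p_q}_t)\to0$. But $\bigcap_q Z_q=\emptyset$ because the points are distinct, contradicting (vii).

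Equivalently, in terms of your own observation: apply (vii) to $\overline{\mathscr{L}^q_t}\cap\{y:\operatorname{dist}(y,K_t)\ge\varepsilon\}$. This shows these sets are eventually empty, so the $x^p_t$ eventually lie within $\varepsilon$ of the compact set $K_t$.

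The same argument applies to every subsequence. So your successive extraction over $t=1,\dots,n$ and your closedness argument then complete (vii), using only axioms (iii), (iv) and (vii) for the $\mu_t$.

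A minor correction to your diagnosis of the naive approach: an intersection of products is always the product of the intersections. What actually goes wrong is that $\mathscr{L}^q$ is in general a proper subset of $\mathscr{L}^q_1\times\cdots\times\mathscr{L}^q_n$. So a point built from coordinate limits need not lie in $\bigcap_q\mathscr{L}^q$.
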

\begin{exm}(\cite{Banas1})
	Suppose $\mu$ is a $\mathscr{MNC}$ on the Banach space $\mathscr{B}$. Define $ \mathscr{J}(\mathscr{C},\mathscr{D} ) = \mathscr{C}+\mathscr{D} ; \mathscr{C},\mathscr{D} \in \mathbb{R}_{+}$. Then $ \mathscr{J}$ has all the propertiesmentioned in Theorem $\ref{T73}.$ Hence, $\tilde{\mu}(\mathcal{K}) = \mu(\mathscr{L}_1) + \mu(\mathscr{L}_2)$
	is a $\mathscr{MNC}$ in the space $\mathscr{B}\times \mathscr{B}$,\;\; where $\mathscr{L}_t$ , $ t = 1,\; 2$ denote the natural projections of $\mathscr{L}$.
\end{exm}
\begin{thm}\label{TT7}
	Suppose $\mathscr{C}$ is a $\mathscr{NBCCS}$ of a Banach Space $\mathscr{B}$ and also, let $\Psi: \mathscr{C}\times\mathscr{C} \rightarrow \mathscr{C}$ be a continuous operator with
	\begin{equation} \label{eq71}
		Q[\mathscr{E}\left\lbrace \mu(\Psi (s_1\times s_2)) ,F(\mu(\Psi (s_1\times s_2)))\right\rbrace ] \leq \frac{1}{2}\delta[Q[\mathscr{E}\left\lbrace \mu(s_1),F(\mu(s_1))\right\rbrace +\mathscr{E}\left\lbrace\mu(s_2) ,F(\mu(s_2))\right\rbrace ]]
	\end{equation}
	for all $s_1, s_2\subseteq \mathscr{C}$, where $Q$ , $\mathscr{E}$ and $F$ are as in theorem {\ref{T7}} also $\mu$ is an arbitrary $\mathscr{MNC}.$ In addition, we assume $Q(\mathscr{A}+\mathscr{S}) \leq Q(\mathscr{A})+Q(\mathscr{S});$ $ \mathscr{A},\mathscr{S} \geq 0$ and $F(\mathscr{A}+\mathscr{S}) \leq F(\mathscr{A})+F(\mathscr{S});$ $ \mathscr{A},\mathscr{S} \geq 0.$Then $\Psi$ has at least a couple fixed point in $\mathscr{C}$. 
\end{thm}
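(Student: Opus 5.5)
We reduce the coupled problem to a single-map problem on $\mathscr{C}\times\mathscr{C}$ and apply Theorem \ref{T7} there. Define $\tilde{\Psi}:\mathscr{C}\times\mathscr{C}\rightarrow \mathscr{C}\times\mathscr{C}$ by
\[
\tilde{\Psi}(c,d)=\left(\Psi(c,d),\Psi(d,c)\right).
\]
Since $\mathscr{C}$ is a $\mathscr{NBCCS}$ of $\mathscr{B}$, the product $\mathscr{C}\times\mathscr{C}$ is a $\mathscr{NBCCS}$ of $\mathscr{B}\times\mathscr{B}$ with the norm $\|(c,d)\|=\|c\|+\|d\|$. The map $\tilde{\Psi}$ is continuous because $\Psi$ is. By the Example following Theorem \ref{T73},
\[
\tilde{\mu}(\mathscr{L})=\mu(\mathscr{L}_1)+\mu(\mathscr{L}_2)
\]
is a $\mathscr{MNC}$ on $\mathscr{B}\times\mathscr{B}$. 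A fixed point $(c,d)$ of $\tilde{\Psi}$ satisfies $\Psi(c,d)=c$ and $\Psi(d,c)=d$, which is exactly a $\mathscr{CFP}$ in the sense of Definition \ref{d73}. So it suffices to check the contractive condition (\ref{eq7}) for $\tilde{\Psi}$ and $\tilde{\mu}$.

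Fix $\mathscr{L}\subseteq\mathscr{C}\times\mathscr{C}$ with projections $\mathscr{L}_1,\mathscr{L}_2$. Then $\mathscr{L}\subseteq\mathscr{L}_1\times\mathscr{L}_2$, so
\[
(\tilde{\Psi}\mathscr{L})_1\subseteq\Psi(\mathscr{L}_1\times\mathscr{L}_2),\qquad (\tilde{\Psi}\mathscr{L})_2\subseteq\Psi(\mathscr{L}_2\times\mathscr{L}_1).
\]
Using monotonicity of $\mu$, $F$, $Q$ and $\mathscr{E}$ (the monotonicity of $\mathscr{E}$ in each argument has to be assumed or extracted from the axioms), then the subadditivity (c) of $\mathscr{E}$, the subadditivity of $F$, and the assumed subadditivity of $Q$, we get
\begin{align*}
Q\left[\mathscr{E}\left\lbrace\tilde{\mu}(\tilde{\Psi}\mathscr{L}),F(\tilde{\mu}(\tilde{\Psi}\mathscr{L}))\right\rbrace\right]
&\leq Q\left[\mathscr{E}\left\lbrace\mu(\Psi(\mathscr{L}_1\times\mathscr{L}_2)),F(\cdot)\right\rbrace+\mathscr{E}\left\lbrace\mu(\Psi(\mathscr{L}_2\times\mathscr{L}_1)),F(\cdot)\right\rbrace\right]\\
&\leq Q[\mathscr{E}\{\cdots\}_{12}]+Q[\mathscr{E}\{\cdots\}_{21}].
\end{align*}
Applying (\ref{eq71}) to each term bounds each by $\frac{1}{2}\delta\left[Q\left[\mathscr{E}\{\mu(\mathscr{L}_1),F(\mu(\mathscr{L}_1))\}+\mathscr{E}\{\mu(\mathscr{L}_2),F(\mu(\mathscr{L}_2))\}\right]\right]$, so the two halves sum to one full $\delta[\cdots]$.

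The remaining step is to compare the argument of $\delta$ with $Q[\mathscr{E}\{\tilde{\mu}(\mathscr{L}),F(\tilde{\mu}(\mathscr{L}))\}]$. \textbf{This is the main obstacle.} Subadditivity of $\mathscr{E}$ gives
\[
\mathscr{E}\{\mu_1+\mu_2,F(\mu_1)+F(\mu_2)\}\leq\mathscr{E}\{\mu_1,F\mu_1\}+\mathscr{E}\{\mu_2,F\mu_2\},
\]
and subadditivity of $F$ gives $F(\mu_1+\mu_2)\le F(\mu_1)+F(\mu_2)$. Both inequalities point the wrong way for dominating the sum by the $\tilde{\mu}$ expression. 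I would therefore first try one of two routes.

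\emph{Route 1.} Run the iteration of Theorem \ref{T7} directly with the quantity
\[
\Phi(\mathscr{L})=\mathscr{E}\{\mu(\mathscr{L}_1),F(\mu(\mathscr{L}_1))\}+\mathscr{E}\{\mu(\mathscr{L}_2),F(\mu(\mathscr{L}_2))\}
\]
in place of $\mathscr{E}(\tilde{\mu},F(\tilde{\mu}))$. Set $\mathscr{C}_0=\mathscr{C}\times\mathscr{C}$ and $\mathscr{C}_{k+1}=Conv(\tilde{\Psi}\mathscr{C}_k)$. Convexity and closure invariance of $\mu$ transfer to the projections, and the estimate above, with monotonicity of $Q$ and $\delta$, should give $Q(\Phi(\mathscr{C}_{k+1}))\leq\delta^{k+1}(Q(\Phi(\mathscr{C}_0)))\to0$. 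Then property (c) of $Q$ and property (a) of $\mathscr{E}$ force $\mu((\mathscr{C}_k)_t)\to0$ for $t=1,2$, hence $\tilde{\mu}(\mathscr{C}_k)\to0$. Axiom (vii) gives a nonempty, compact, convex set $\bigcap_k\mathscr{C}_k$ that is invariant under $\tilde{\Psi}$, and Theorem \ref{Ts} yields a fixed point of $\tilde{\Psi}$, i.e.\ a coupled fixed point.

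\emph{Route 2.} If that bookkeeping fails, fall back on the model case $\mathscr{E}(z_1,z_2)=z_1+z_2$ with $F$ additive, where the two expressions coincide, and treat the general case through Route 1.
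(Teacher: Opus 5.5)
Your reduction to $\tilde{\Psi}$ on $\mathscr{C}\times\mathscr{C}$ with $\tilde{\mu}$ matches the paper's proof, where $\Psi^{cf}$ is your $\tilde{\Psi}$. So does your inequality chain up to $\delta\bigl[Q[\Phi(\mathscr{L})]\bigr]$. The paper then finishes at exactly the point you flagged. It writes
\[
\delta\bigl[Q[\Phi(s)]\bigr]=\delta\bigl[Q[\mathscr{E}\{\mu(s_1)+\mu(s_2),F(\mu(s_1))+F(\mu(s_2))\}]\bigr]=\delta\bigl[Q[\mathscr{E}\{\tilde{\mu}(s),F(\tilde{\mu}(s))\}]\bigr]
\]
and applies Theorem \ref{T7} as a black box. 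As you observed, axiom (c) of $\mathscr{E}$ and subadditivity of $F$ give only ``$\geq$'' (left to right) in both places. So this verification of (\ref{eq7}) for $\tilde{\Psi}$ is valid only in your Route 2 setting ($\mathscr{E}(z_1,z_2)=z_1+z_2$, $F$ additive).

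Your Route 1 is a genuinely different and correct way to close the argument: instead of checking the hypothesis of Theorem \ref{T7}, you re-run its iteration with $\Phi$ as the control quantity. The step you hedged with ``should give'' is immediate:
\begin{itemize}
\item From $(\tilde{\Psi}\mathscr{L})_1\subseteq\Psi(\mathscr{L}_1\times\mathscr{L}_2)$, $(\tilde{\Psi}\mathscr{L})_2\subseteq\Psi(\mathscr{L}_2\times\mathscr{L}_1)$ and monotonicity, $\Phi(\tilde{\Psi}\mathscr{L})$ is at most the sum of the two $\mathscr{E}$-terms.
\item Subadditivity of $Q$, together with (\ref{eq71}) applied to $(\mathscr{L}_1,\mathscr{L}_2)$ and to $(\mathscr{L}_2,\mathscr{L}_1)$, then gives $Q(\Phi(\tilde{\Psi}\mathscr{L}))\leq\delta(Q(\Phi(\mathscr{L})))$.
\item $\mu((\mathscr{C}_{k+1})_t)=\mu((\tilde{\Psi}\mathscr{C}_k)_t)$ holds because $(\tilde{\Psi}\mathscr{C}_k)_t\subseteq(\mathscr{C}_{k+1})_t\subseteq Conv((\tilde{\Psi}\mathscr{C}_k)_t)$, the projections being linear and continuous.
\end{itemize}
If its equalities held, the paper's route would reduce everything to a one-line appeal to Theorem \ref{T7}. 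Yours costs repeating the iteration, but it is actually valid. It also never uses axiom (c) of $\mathscr{E}$ or the subadditivity of $F$.

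On your caveat: monotonicity of $\mathscr{E}$ does not follow from (a)--(c). For instance, $\mathscr{E}(z_1,z_2)=z_1+z_2+2|\sin z_1|$ satisfies them but decreases in $z_1$ on $(2\pi/3,\pi)$. The paper uses this monotonicity silently too, already in its first ``$\leqslant$''. You can remove the assumption by iterating on products:
\[
\mathscr{D}_0=\mathscr{C}\times\mathscr{C},\qquad \mathscr{D}_{k+1}=Conv\,\Psi(\mathscr{D}_k^1\times\mathscr{D}_k^2)\times Conv\,\Psi(\mathscr{D}_k^2\times\mathscr{D}_k^1).
\]
These sets are nested, closed and convex. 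Moreover $\mu(\mathscr{D}_{k+1}^1)=\mu(\Psi(\mathscr{D}_k^1\times\mathscr{D}_k^2))$ holds with equality, and likewise for the second factor, so no monotonicity of $\mathscr{E}$ is needed. Invariance $\tilde{\Psi}(\bigcap_k\mathscr{D}_k)\subseteq\bigcap_k\mathscr{D}_k$ still holds, since $(c,d)\in\mathscr{D}_k$ forces $\tilde{\Psi}(c,d)\in\mathscr{D}_{k+1}$.
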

\begin{proof}
	Consider a mapping $\Psi^{cf}: \mathscr{C}\times\mathscr{C} \rightarrow \mathscr{C}\times\mathscr{C}$  by $\Psi^{cf}(\mathscr{A},\mathscr{S}) = (\Psi(\mathscr{A},\mathscr{S}) , \Psi(\mathscr{S},\mathscr{A}));$ $\mathscr{A},\mathscr{S}\in\mathscr{C}.$ It is trivial that $\Psi^{cf}$ is continuous.

	Let $s\subseteq \mathscr{C}\times\mathscr{C}$ be non-empty. We have $\tilde{\mu}(s) = \mu(s_1) + \mu(s_2)$ is an MNC, where $s_1, s_2$ are the natural projections of $s$ into $\mathscr{B}.$\\

	We get,
	\begin{align*}
		&Q[\mathscr{E}\left\lbrace \tilde{\mu}(\Psi^{cf} (s)) ,F(\tilde{\mu}(\Psi^{cf} (s)))\right\rbrace ]\\
		&\leqslant Q[\mathscr{E}\left\lbrace \tilde{\mu}(\Psi (s_1\times s_2)\times\Psi (s_2\times s_1)) ,F(\tilde{\mu}(\Psi (s_1\times s_2)\times\Psi (s_2\times s_1)))\right\rbrace ]\\
		&=Q[\mathscr{E}\left\lbrace \mu(\Psi (s_1\times s_2))+\mu(\Psi (s_2\times s_1)) ,F(\mu(\Psi (s_1\times s_2))+\mu(\Psi (s_2\times s_1)))\right\rbrace ]\\
		&\leq Q[\mathscr{E}\left\lbrace \mu(\Psi (s_1\times s_2))+\mu(\Psi (s_2\times s_1)) ,F(\mu(\Psi (s_1\times s_2)))+F(\mu(\Psi (s_2\times s_1)))\right\rbrace ] 
		\\
		&\leq Q[\mathscr{E}\left\lbrace \mu(\Psi (s_1\times s_2)),F(\mu(\Psi (s_1\times s_2)))\right\rbrace +\mathscr{E}\left\lbrace\mu(\Psi (s_2\times s_1)) ,F(\mu(\Psi (s_2\times s_1)))\right\rbrace ] 
		\\
		&\leq Q[\mathscr{E}\left\lbrace \mu(\Psi (s_1\times s_2)),F(\mu(\Psi (s_1\times s_2)))\right\rbrace ]+Q[\mathscr{E}\left\lbrace\mu(\Psi (s_2\times s_1)) ,F(\mu(\Psi (s_2\times s_1)))\right\rbrace ] 
		\\
		&\leq \frac{1}{2}\delta[Q[\mathscr{E}\left\lbrace \mu(s_1),F(\mu(s_1))\right\rbrace +\mathscr{E}\left\lbrace\mu(s_2) ,F(\mu(s_2))\right\rbrace ]]\\
		& +\frac{1}{2}\delta[Q[\mathscr{E}\left\lbrace \mu(s_2),F(\mu(s_2))\right\rbrace +\mathscr{E}\left\lbrace\mu(s_1) ,F(\mu(s_1))\right\rbrace ]]
		\\
		&\leq \delta[Q[\mathscr{E}\left\lbrace \mu(s_1),F(\mu(s_1))\right\rbrace +\mathscr{E}\left\lbrace\mu(s_2) ,F(\mu(s_2))\right\rbrace ]] 
		\\
		&= \delta[Q[\mathscr{E}\left\lbrace \mu(s_1)+\mu(s_2), F(\mu(s_1)) +F(\mu(s_2))\right\rbrace ]] 
		\\
		&= \delta[Q[\mathscr{E}\left\lbrace \tilde{\mu}(s),F(\tilde{\mu}(s))\right\rbrace ]
	\end{align*}
	By Theorem {\ref{T7}}, we conclude that $\Psi^{cf}$ has minimum of one fixed point in $\mathscr{C}\times\mathscr{C}.$ That is, $\Psi$ has
	minimum of one coupled fixed point.
\end{proof}

\section{Application}
\subsection{$\mathscr{MNC}$ on $\Bar{\mathds{C}}([0,\mathbb{T}])$:}
Suppose  $\mathscr{B}=\Bar{\mathds{C}}(\Bar{\mathds{J}})$ is the space of all real valued continuous function defined on $\Bar{\mathds{J}},$ where $\Bar{\mathds{J}}=[0,\mathbb{T}].$ Then,
\[
\parallel \mathscr{T} \parallel=\sup\left\lbrace \left| \mathscr{T}(z)\right|:z \in \Bar{\mathds{J}} \right\rbrace ,\; \mathscr{T}\in \mathscr{B}.
\]

Let $\mathscr{F}(\neq \phi) \subseteq \mathscr{B}$ be   bounded. For  $\delta >0$ and $\mathscr{T} \in \mathscr{F}$ such that modulus of the continuity of $\mathscr{T}$ is defined as
\[\mu(\mathscr{T},\delta)=\sup \left\lbrace \left| \mathscr{T}(z_{1})-\mathscr{T}(z_{2})\right|: z_{1},z_{2} \in \Bar{\mathds{J}} , \left| z_{1}-z_{2}\right|\leq \delta \right\rbrace.
\]
Moreover, we set
\[ \mu(\mathscr{F},\delta)=\sup\left\lbrace \mu(\mathscr{T},\delta): \mathscr{T} \in \mathscr{F}\right\rbrace;~ \mu_{0}(\mathscr{F})=\lim\limits_{\delta \rightarrow 0}\mu(\mathscr{F},\delta).
\]
Here, the mapping $\mu_{0}$ is known as $\mathscr{MNC}$ in $\mathscr{B}$, with $\mathscr{Q}(\mathscr{F})=\frac{1}{2}\mu_{0}(\mathscr{F})$
(see \cite{Banas1}) as the Hausdorff $\mathscr{MNC}$ $\mathscr{Q}$.

\subsection{Existence results of functional integral equation :}
Let $-\infty\leq a<b<\infty,$ $\;0<\rho,k,\gamma<1$ and also let $\phi$ be a continuous function. Then, defines the integral $(k,\rho)$-fractional Hilfer left integral is given by (we refer \cite{7H1,7H2,7H3}).
\[
(^{\rho}_{k}\mathscr{J}^{\gamma}_{a+}\phi)(x)=\frac{\rho^{1-\frac{\gamma}{k}}}{k\Gamma_{k}(\gamma)}\int_{a}^{x}\frac{t^{\rho-1}}{\left(x^{\rho}-t^{\rho} \right)^{1-\frac{\gamma}{k}}}\phi(t)dt ,\;\;\;x>a\;\;x \in [a,b].
\]
Given that $a=1$ and $b=\mathbb{T}$, we compute
\[
(^{\rho}_{k}\mathscr{J}^{\gamma}\phi)(x)=\frac{\rho^{1-\frac{\gamma}{k}}}{k\Gamma_{k}(\gamma)}\int_{1}^{x}\frac{t^{\rho-1}}{\left(x^{\rho}-t^{\rho} \right)^{1-\frac{\gamma}{k}}}\phi(t)dt ,\;\;\;x>a\;\;x \in [a,b].
\]

In this study, we examine the subsequent functional integral equation:

\begin{equation}\label{eq75}
	\mathscr{L}=\varDelta(x,\mathscr{H}\left(x, \mathscr{L}(x)),(^{\rho}_{k}\mathscr{J}^{\gamma}\mathscr{L})(x) \right) ,
\end{equation}
where $x \in \Bar{\mathds{J}}=[1,\mathbb{T}]\;\;and\;\;0<\rho,k,\gamma<1.$\\
Let  define 
\[
\mathscr{Q}_{r_{0}}=\left\lbrace \mathscr{L}\in \mathscr{B}: \parallel \mathscr{L} \parallel \leq r_{0} \right\rbrace. \]
Assume that
\begin{enumerate}
	\item [(H1)]
	$\varDelta:\Bar{\mathds{J}} \times \mathbb{R}^{2} \rightarrow \mathbb{R},\;\; \mathscr{H}: \Bar{\mathds{J}} \times \mathbb{R}\rightarrow \mathbb{R}$ be continuous and $\exists$ constants $c_{1},\; c_{2},\; c_{3} \geq 0$ satisfying
	\[
	\left| \varDelta(x,\mathscr{H},\mathscr{J}_{1})-\varDelta(x,\bar{\mathscr{H}},\bar{\mathscr{J}}_{1})\right|\leq c_{1} \left|\mathscr{H}-\bar{\mathscr{H}} \right|+c_{2}\left|\mathscr{J}_{1}-\bar{\mathscr{J}}_{1} \right|, \;\;\; x\in \Bar{\mathds{J}};\; \mathscr{H},\mathscr{J}_{1},\bar{\mathscr{H}},\bar{\mathscr{J}}_{1} \in\mathbb{R}
	\]
	and
	\[
	\left|\mathscr{H}(x, \mathscr{L}_{1})-\mathscr{H}(x, \mathscr{L}_{2}) \right|\leq c_{3}\left|\mathscr{L}_{1}-\mathscr{L}_{2} \right|\;\;\;\;\;(\mathscr{L}_{1},\mathscr{L}_{2}\in \mathbb{R}).
	\]
	\item[(H2)] There exists  $r_{0}>0$ satisfying
	\[
	\bar{\varDelta}=\sup\left\lbrace \left| \varDelta(x,\mathscr{H},\mathscr{J}_{1})\right|:x \in \Bar{\mathds{J}},\mathscr{H}\in [-\hat{\mathscr{H}},\hat{\mathscr{H}}],\mathscr{J}_{1}\in [-\hat{\mathscr{J}},\hat{\mathscr{J}}]   \right\rbrace \leq r_{0}
	\]
	and \[c_{1}c_{3}<1,
	\]
	where \[\hat{\mathscr{H}}=\sup\left\lbrace \left|\mathscr{H} \right|: x \in \Bar{\mathds{J}}, \mathscr{L}(x)\in [-r_{0},r_{0}] \right\rbrace \]
	and
	\[
	\hat{\mathscr{J}}=\sup\left\lbrace \left|^{\rho}_{k}\mathscr{J}^{\gamma}\mathscr{L}(x) \right|: x \in \Bar{\mathds{J}}, \mathscr{L}(x)\in [-r_{0},r_{0}] \right\rbrace.
	\]
	\item[(H3)] $Max\left|\varDelta\left(x,0,0 \right)\right|=\mathscr{M}$ and ${\mathscr{H}}(x,0)=0$ for all $x \in \Bar{\mathds{J}}=[1,\mathbb{T}]$.
	\item[(H4)] The given below inequality has a positive result $r_{0}$ such that
	\begin{equation*}
		c_{1}c_{3}r_{0}+\frac{c_{2}r_{0}\rho^{-\frac{\gamma}{k}}}{\gamma \Gamma_{k}(\gamma)}\left( \mathbb{T}^{\rho}-1\right)^{\frac{\gamma}{k}}+\mathscr{M} \leq r_{0}.
	\end{equation*}
\end{enumerate}

\begin{thm}\label{T74}
	If the assumptions $(H1)-(H4)$ holds, then the equation (\ref{eq75}) has at least one solution in $\mathscr{B}=\Bar{\mathds{C}}(\Bar{\mathds{J}}).$
\end{thm}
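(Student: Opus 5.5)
We will apply Corollary \ref{c71} (equivalently Theorem \ref{T7} with $Q(z)=z$, $\mathscr{E}(z_1,z_2)=z_1+z_2$, $F\equiv 0$, $\delta(z)=c_1c_3 z$) with the measure of noncompactness $\mu_0$ on $\mathscr{B}=\Bar{\mathds{C}}(\Bar{\mathds{J}})$. The operator is
\[
(\Psi\mathscr{L})(x)=\varDelta\bigl(x,\mathscr{H}(x,\mathscr{L}(x)),({}^{\rho}_{k}\mathscr{J}^{\gamma}\mathscr{L})(x)\bigr),\qquad x\in\Bar{\mathds{J}},
\]
and the candidate set is the ball $\mathscr{Q}_{r_0}$, which is a $\mathscr{NBCCS}$ of $\mathscr{B}$. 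The plan has four steps: (1) $\Psi$ maps $\mathscr{B}$ into $\mathscr{B}$; (2) $\Psi(\mathscr{Q}_{r_0})\subseteq\mathscr{Q}_{r_0}$; (3) $\Psi$ is continuous on $\mathscr{Q}_{r_0}$; (4) $\mu_0(\Psi\mathscr{K})\le c_1c_3\,\mu_0(\mathscr{K})$ for every $\mathscr{K}\subseteq\mathscr{Q}_{r_0}$.

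\emph{Steps (1)--(3).} For (1), continuity of $x\mapsto({}^{\rho}_{k}\mathscr{J}^{\gamma}\mathscr{L})(x)$ comes from the substitution $t^{\rho}=1+(x^{\rho}-1)s$. This writes the integral as $(x^{\rho}-1)^{\gamma/k}$ times an integral over $[0,1]$ against the integrable weight $(1-s)^{\gamma/k-1}$, and dominated convergence then gives continuity in $x$. The same computation, using $\int_1^x t^{\rho-1}(x^{\rho}-t^{\rho})^{\gamma/k-1}\,dt=\frac{k}{\rho\gamma}(x^{\rho}-1)^{\gamma/k}$, yields
\[
|({}^{\rho}_{k}\mathscr{J}^{\gamma}\mathscr{L})(x)|\le \frac{\rho^{-\gamma/k}}{\gamma\Gamma_k(\gamma)}(\mathbb{T}^{\rho}-1)^{\gamma/k}\|\mathscr{L}\|.
\]
For (2), we use (H1) and (H3) to write $|(\Psi\mathscr{L})(x)|\le|\varDelta(x,0,0)|+c_1|\mathscr{H}(x,\mathscr{L}(x))|+c_2|{}^{\rho}_{k}\mathscr{J}^{\gamma}\mathscr{L}(x)|$, which is bounded by the left-hand side of (H4) and hence by $r_0$. (H2) gives the same conclusion directly. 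For (3), we take $\mathscr{L}_n\to\mathscr{L}$ in $\mathscr{Q}_{r_0}$ and use the Lipschitz bounds in (H1) together with the bound on the integral operator to get $\|\Psi\mathscr{L}_n-\Psi\mathscr{L}\|\le\bigl(c_1c_3+c_2\frac{\rho^{-\gamma/k}}{\gamma\Gamma_k(\gamma)}(\mathbb{T}^{\rho}-1)^{\gamma/k}\bigr)\|\mathscr{L}_n-\mathscr{L}\|$.

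\emph{Step (4), the main obstacle.} Fix $\varepsilon>0$, $\mathscr{L}\in\mathscr{K}$, and $|x_2-x_1|\le\varepsilon$ with $x_1<x_2$. Split
\[
|(\Psi\mathscr{L})(x_2)-(\Psi\mathscr{L})(x_1)|
\]
into three pieces. The first is the variation of $\varDelta$ in its first argument, controlled by the uniform-continuity modulus $\omega_\varDelta(\varepsilon)$ of $\varDelta$ on the compact set $\Bar{\mathds{J}}\times[-\hat{\mathscr{H}},\hat{\mathscr{H}}]\times[-\hat{\mathscr{J}},\hat{\mathscr{J}}]$. The second is $c_1|\mathscr{H}(x_2,\mathscr{L}(x_2))-\mathscr{H}(x_1,\mathscr{L}(x_1))|$, bounded by $c_1c_3\,\mu(\mathscr{L},\varepsilon)+c_1\omega_{\mathscr{H}}(\varepsilon)$. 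The third is $c_2|{}^{\rho}_{k}\mathscr{J}^{\gamma}\mathscr{L}(x_2)-{}^{\rho}_{k}\mathscr{J}^{\gamma}\mathscr{L}(x_1)|$. For this last piece we use the substituted form: it is bounded by $r_0$ times a quantity $\eta(\varepsilon)\to0$ that depends only on $\rho,k,\gamma,\mathbb{T}$. The bound comes from uniform continuity of $(x^{\rho}-1)^{\gamma/k}$ plus the difference $\int_0^1|\mathscr{L}(t_{x_2}(s))-\mathscr{L}(t_{x_1}(s))|(1-s)^{\gamma/k-1}ds$. That difference is controlled by $\mu(\mathscr{L},C\varepsilon)$ times a small factor $(x^{\rho}-1)^{\gamma/k}$, or simply by $2r_0$ times a small factor, and the two alternatives need to be balanced carefully near $x=1$. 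This is the delicate computation. If a clean bound is elusive, we will fall back on the crude estimate that the integral term is uniformly equicontinuous over all of $\mathscr{Q}_{r_0}$, since the operator is compact. That is the key point, and it means this term contributes $0$ to $\mu_0$.

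Taking the supremum over $\mathscr{L}\in\mathscr{K}$ and letting $\varepsilon\to0$ then gives $\mu_0(\Psi\mathscr{K})\le c_1c_3\,\mu_0(\mathscr{K})$, where $c_1c_3<1$ by (H2). Corollary \ref{c71} now provides a fixed point of $\Psi$ in $\mathscr{Q}_{r_0}$, and this fixed point is a solution of (\ref{eq75}).
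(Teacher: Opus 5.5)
Your architecture is the paper's: Corollary \ref{c71} with $\mu_0$ on $\mathscr{Q}_{r_0}$, the self-map and Lipschitz estimates from (H1), (H3) and (H4), and a three-piece splitting of $|(\Psi\mathscr{L})(x_2)-(\Psi\mathscr{L})(x_1)|$. Your $\mathscr{H}$-piece, with the extra modulus $\omega_{\mathscr{H}}(\varepsilon)$, is more careful than the paper's. The paper bounds $|\mathscr{H}(x_2,\mathscr{L}(x_2))-\mathscr{H}(x_1,\mathscr{L}(x_1))|$ by $c_3|\mathscr{L}(x_2)-\mathscr{L}(x_1)|$, although (H1) gives Lipschitz control only for fixed $x$.

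\textbf{The gap.} It is the step you yourself call the key point: that the fractional-integral piece contributes nothing to $\mu_0(\Psi\mathscr{K})$. Your primary route cannot give this. After the substitution $t^{\rho}=1+(x^{\rho}-1)s$, the $x$-dependence sits inside the argument of $\mathscr{L}$. So the term $(x_1^{\rho}-1)^{\gamma/k}\int_0^1(1-s)^{\gamma/k-1}|\mathscr{L}(t_{x_2}(s))-\mathscr{L}(t_{x_1}(s))|\,ds$ is controlled only by the oscillation $\mu(\mathscr{L},C\varepsilon)$. Its prefactor is small only near $x_1=1$. For a non-equicontinuous family (think $\mathscr{L}_n(t)=r_0\sin(nt)$), this term is genuinely not uniformly small away from $x_1=1$, so no balancing helps. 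In the limit this decomposition leaves an extra $c_2\frac{\rho^{-\gamma/k}}{\gamma\Gamma_k(\gamma)}(\mathbb{T}^{\rho}-1)^{\gamma/k}\mu_0(\mathscr{K})$, not $0$. Your fallback, ``the operator is compact,'' is by Arzel\`a--Ascoli exactly the uniform equicontinuity that must be shown. Unless you cite it as a known result, Step (4) assumes its conclusion.

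\textbf{The fix.} Keep $\mathscr{L}(t)$ fixed and move the $x$-dependence onto the kernel, as the paper does. With $w(x,t)=t^{\rho-1}(x^{\rho}-t^{\rho})^{\gamma/k-1}$, write the difference as $\int_{x_1}^{x_2}w(x_2,t)\mathscr{L}(t)\,dt+\int_1^{x_1}[w(x_2,t)-w(x_1,t)]\mathscr{L}(t)\,dt$. Since $x\mapsto w(x,t)$ is monotone, the bracket has constant sign on $[1,x_1)$. Its absolute integral can therefore be computed exactly from the antiderivative $-\frac{k}{\rho\gamma}(x^{\rho}-t^{\rho})^{\gamma/k}$, giving
\[
|({}^{\rho}_{k}\mathscr{J}^{\gamma}\mathscr{L})(x_2)-({}^{\rho}_{k}\mathscr{J}^{\gamma}\mathscr{L})(x_1)|\le\frac{\rho^{-\gamma/k}r_0}{\gamma\Gamma_k(\gamma)}\bigl[2(x_2^{\rho}-x_1^{\rho})^{\gamma/k}+(x_2^{\rho}-1)^{\gamma/k}-(x_1^{\rho}-1)^{\gamma/k}\bigr].
\]
This depends on $\mathscr{L}$ only through $\|\mathscr{L}\|\le r_0$ and tends to $0$ uniformly as $\varepsilon\to0$, so Step (4) closes with constant $c_1c_3$.

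Alternatively, your substitution route can be rescued. It gives the constant $L=c_1c_3+c_2\frac{\rho^{-\gamma/k}}{\gamma\Gamma_k(\gamma)}(\mathbb{T}^{\rho}-1)^{\gamma/k}$, and (H4) reads exactly $Lr_0+\mathscr{M}\le r_0$. Hence $L<1$ when $\mathscr{M}>0$. When $\mathscr{M}=0$, (H3) makes $\mathscr{L}\equiv0$ a solution. The same observation shows that your Step (3) estimate already makes $\Psi$ a Banach contraction on $\mathscr{Q}_{r_0}$ whenever $\mathscr{M}>0$.
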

\begin{proof}
	We take the operator $\mathscr{D}: \mathscr{B} \rightarrow \mathscr{B},$  defined by:
	\[(\mathscr{D} \mathscr{L})(x)= \varDelta(x,\mathscr{H}\left(x, \mathscr{L}(x)),(^{\rho}_{k}\mathscr{J}^{\gamma}\mathscr{L})(x) \right)
	\]

	{\bf Step 1:}   We show that $\mathscr{D}$
	maps $\mathscr{Q}_{r_0}$ into $\mathscr{Q}_{r_0}$.  Let  $\mathscr{L} \in \mathscr{Q}_{r_{0}}$. We now have
	\begin{eqnarray*}
		\left| (\mathscr{D} \mathscr{L})(x)\right|&\leq&\left|\varDelta(x,\mathscr{H}\left(x, \mathscr{L}(x)),(^{\rho}_{k}\mathscr{J}^{\gamma}\mathscr{L})(x) \right)-\varDelta\left(x,0,0 \right)\right|+\left|\varDelta\left(x,0,0 \right)\right|\\
		& \leq& c_{1}\left| \mathscr{H}(x, \mathscr{L}(x))-0\right| + c_{2}\left|(^{\rho}_{k}\mathscr{J}^{\gamma}\mathscr{L})(x)-0\right|+\left|\varDelta\left(x,0,0 \right)\right|\\
		& \leq& c_{1}c_{3}\left| \mathscr{L}(x)\right| + c_{2}\left|(^{\rho}_{k}\mathscr{J}^{\gamma}\mathscr{L})(x)\right|+\mathscr{M}.
	\end{eqnarray*}
	Also,
	
	\begin{eqnarray*}
		\left|(^{\rho}_{k}\mathscr{J}^{\gamma}\mathscr{L})(x)\right|&=&\left|\frac{\rho^{1-\frac{\gamma}{k}}}{k\Gamma_{k}(\gamma)}\int_{1}^{x}\frac{t^{\rho-1}}{\left(x^{\rho}-t^{\rho} \right)^{1-\frac{\gamma}{k}}}\mathscr{L}(t)dt \right|\\
		&\leq &\frac{\rho^{1-\frac{\gamma}{k}}}{k\Gamma_{k}(\gamma)}\int_{1}^{x}\frac{t^{\rho-1}}{\left(x^{\rho}-t^{\rho} \right)^{1-\frac{\gamma}{k}}}\left| \mathscr{L}(t)\right| dt \\
		&\leq &\frac{r_{0}\rho^{1-\frac{\gamma}{k}}}{k\Gamma_{k}(\gamma)}\int_{1}^{x}\frac{t^{\rho-1}}{\left(x^{\rho}-t^{\rho} \right)^{1-\frac{\gamma}{k}}}dt \\
		&\leq &\frac{r_{0}\rho^{-\frac{\gamma}{k}}}{\gamma \Gamma_{k}(\gamma)}\left( \mathbb{T}^{\rho}-1\right)^{\frac{\gamma}{k}}.
	\end{eqnarray*}
	
	Hence, $\parallel \mathscr{L}  \parallel <r_0$ gives
	\[
	\parallel \mathscr{D} \mathscr{L}\parallel\leq  c_{1}c_{3}r_{0}+\frac{c_{2}r_{0}\rho^{-\frac{\gamma}{k}}}{\gamma \Gamma_{k}(\gamma)}\left( \mathbb{T}^{\rho}-1\right)^{\frac{\gamma}{k}}+\mathscr{M}\leq r_{0}.
	\]
	
	It folloews from $(H4)$ that $\mathscr{D}$
	maps $\mathscr{Q}_{r_0}$ into $\mathscr{Q}_{r_0}$.\\

	{\bf Step 2:} Next we claim that, $\mathscr{D}$ is continuous on $\mathscr{Q}_{r_{0}}.$

	Fix $\delta >0,$ also let $\mathscr{L}, \bar{\mathscr{L}} \in \mathscr{Q}_{r_{0}}$ such that $\parallel \mathscr{L}- \bar{\mathscr{L}} \parallel < \delta.$ We now obtain
	\begin{eqnarray*}
		\left|  (\mathscr{D} \mathscr{L})(x)-  (\mathscr{D} \bar{\mathscr{L}})(x)\right|& \leq&  \left|\varDelta(x,\mathscr{H}\left(x, \mathscr{L}(x)),(^{\rho}_{k}\mathscr{J}^{\gamma}\mathscr{L})(x) \right)-\varDelta(x,\mathscr{H}\left(x, \bar{\mathscr{L}}(x)),(^{\rho}_{k}\mathscr{J}^{\gamma}\bar{\mathscr{L}})(x) \right)\right|\\
		& \leq& c_{1}\left| \mathscr{H}(x, \mathscr{L}(x))-\mathscr{H}(x, \bar{\mathscr{L}}(x))\right|+ c_{2}\left|(^{\rho}_{k}\mathscr{J}^{\gamma}\mathscr{L})(x)-(^{\rho}_{k}\mathscr{J}^{\gamma}\bar{\mathscr{L}})(x)\right| .
	\end{eqnarray*}
	Moreover,
	\begin{eqnarray*}
		\left|(^{\rho}_{k}\mathscr{J}^{\gamma}\mathscr{L})(x)-(^{\rho}_{k}\mathscr{J}^{\gamma}\bar{\mathscr{L}})(x)\right|
		&=&\left|\frac{\rho^{1-\frac{\gamma}{k}}}{k\Gamma_{k}(\gamma)}\int_{1}^{x}\frac{t^{\rho-1}}{\left(x^{\rho}-t^{\rho} \right)^{1-\frac{\gamma}{k}}}\left( \mathscr{L}(t)-\bar{\mathscr{L}}(t)\right) dt \right|\\
		&\leq &\frac{\rho^{1-\frac{\gamma}{k}}}{k\Gamma_{k}(\gamma)}\int_{1}^{x}\frac{t^{\rho-1}}{\left(x^{\rho}-t^{\rho} \right)^{1-\frac{\gamma}{k}}}\left|  \mathscr{L}(t)-\bar{\mathscr{L}}(t)\right|  dt \\
		&< &\frac{\delta\rho^{1-\frac{\gamma}{k}}}{k\Gamma_{k}(\gamma)}\int_{1}^{x}\frac{t^{\rho-1}}{\left(x^{\rho}-t^{\rho} \right)^{1-\frac{\gamma}{k}}}  dt \\
		&< &\frac{\delta\rho^{-\frac{\gamma}{k}}}{\gamma \Gamma_{k}(\gamma)}\left( \mathbb{T}^{\rho}-1\right)^{\frac{\gamma}{k}}.
	\end{eqnarray*}
	
	Hence, $\parallel \mathscr{L}- \bar{\mathscr{L}} \parallel < \delta$ gives
	\[
	\left|  (\mathscr{D} \mathscr{L})(x)-  (\mathscr{D} \bar{\mathscr{L}})(x)\right|<c_{1}c_{3}\delta+\frac{c_{2}\delta\rho^{-\frac{\gamma}{k}}}{\gamma \Gamma_{k}(\gamma)}\left( \mathbb{T}^{\rho}-1\right)^{\frac{\gamma}{k}}.
	\]

	As $\delta \rightarrow 0$, we get
	$$\left|  (\mathscr{D} \mathscr{L})(x)-  (\mathscr{D} \bar{\mathscr{L}})(x)\right| \rightarrow 0.$$
	\\ which indicates that $\mathscr{D}$ is continuous on $\mathscr{Q}_{r_{0}}.$\\

	{\bf Step 3:} Here, we estimate $\mathscr{D}$ in relation to $\mu_{0}.$ 
	 Consider that $\varpi (\neq \emptyset) \subseteq \mathscr{Q}_{r_{0}}.$  Let  $\delta >0$ be arbitrary, Also, we are now choose $\mathscr{L} \in \varpi$ with $x_{1}, x_{2} \in \Bar{\mathds{J}}$ such that $\left| x_{2}-x_{1}\right|\leq \delta $ and $x_{2} \geq x_{1}.$
	\par Now,
	\begin{align*}
		&\left| (\mathscr{D} \mathscr{L})(x_{2})-  (\mathscr{D} \mathscr{L})(x_{1})\right|\\
		& = \left| \varDelta(x_{2},\mathscr{H}\left(x_{2}, \mathscr{L}(x_{2})),(^{\rho}_{k}\mathscr{J}^{\gamma}\mathscr{L})(x_{2}) \right)-\varDelta(x_{1},\mathscr{H}\left(x_{1}, \bar{\mathscr{L}}(x_{1})),(^{\rho}_{k}\mathscr{J}^{\gamma}\mathscr{L})(x_{1}) \right)\right|\\
		&\leq  \left| \varDelta(x_{2},\mathscr{H}\left(x_{2}, \mathscr{L}(x_{2})),(^{\rho}_{k}\mathscr{J}^{\gamma}\mathscr{L})(x_{2}) \right)-\varDelta(x_{2},\mathscr{H}\left(x_{2}, \bar{\mathscr{L}}(x_{2})),(^{\rho}_{k}\mathscr{J}^{\gamma}\mathscr{L})(x_{1}) \right)\right|\\
		&+\left|\varDelta(x_{2},\mathscr{H}\left(x_{2}, \mathscr{L}(x_{2})),(^{\rho}_{k}\mathscr{J}^{\gamma}\mathscr{L})(x_{1}) \right)-\varDelta(x_{2},\mathscr{H}\left(x_{1}, \bar{\mathscr{L}}(x_{1})),(^{\rho}_{k}\mathscr{J}^{\gamma}\mathscr{L})(x_{1}) \right)\right|\\
		&+\left|\varDelta(x_{2},\mathscr{H}\left(x_{1}, \mathscr{L}(x_{1})),(^{\rho}_{k}\mathscr{J}^{\gamma}\mathscr{L})(x_{1}) \right)-\varDelta(x_{1},\mathscr{H}\left(x_{1}, \mathscr{L}(x_{1})),(^{\rho}_{k}\mathscr{J}^{\gamma}\bar{\mathscr{L}})(x_{1}) \right) \right|\\
		& \leq c_{2}\left|(^{\rho}_{k}\mathscr{J}^{\gamma}\mathscr{L})(x_{2})- (^{\rho}_{k}\mathscr{J}^{\gamma}\mathscr{L})(x_{1})\right|
		+c_{1}\left|\mathscr{H}(x_{2}, \mathscr{L}(x_{2}))-\mathscr{H}(x_{1}, \mathscr{L}(x_{1}))  \right|\\
		&+\mu_{\varDelta}(\Bar{\mathds{J}}, \delta)\\
		&\leq c_{2}\left|(^{\rho}_{k}\mathscr{J}^{\gamma}\mathscr{L})(x_{2})- (^{\rho}_{k}\mathscr{J}^{\gamma}\mathscr{L})(x_{1})\right|+c_{1}c_{3}\left| \mathscr{L}(x_{2})- \mathscr{L}(x_{1}) \right|+\mu_{\varDelta}(\Bar{\mathds{J}}, \delta),
	\end{align*}
	where
	\[
	\mu_{\varDelta}(\Bar{\mathds{J}}, \delta)=\sup\left\{\begin{array}{ccl}
		\left|\varDelta(x_{2},\mathscr{H},\mathscr{J}_{1})-\varDelta(x_{1},\mathscr{H},\mathscr{J}_{1})\right|:\left| x_{2}-x_{1}\right|\leq \delta;\\ x_{1},x_{2} \in \Bar{\mathds{J}},\mathscr{H}\in [-\hat{\mathscr{H}},\hat{\mathscr{H}}],\mathscr{J}_{1}\in [-\hat{\mathscr{J}},\hat{\mathscr{J}}] 
	\end{array}\right\}.
	\]
	Also,
	 \begin{align*}
		&\left|(^{\rho}_{k}\mathscr{J}^{\gamma}\mathscr{L})(x_{2})- (^{\rho}_{k}\mathscr{J}^{\gamma}\mathscr{L})(x_{1})\right|\\
		& = \left| \frac{\rho^{1-\frac{\gamma}{k}}}{k\Gamma_{k}(\gamma)}\int_{1}^{x_{2}}\frac{t^{\rho-1}}{\left(x_{2}^{\rho}-t^{\rho} \right)^{1-\frac{\gamma}{k}}}\mathscr{L}(t)dt
		-\frac{\rho^{1-\frac{\gamma}{k}}}{k\Gamma_{k}(\gamma)}\int_{1}^{x_{1}}\frac{t^{\rho-1}}{\left(x_{1}^{\rho}-t^{\rho} \right)^{1-\frac{\gamma}{k}}}\mathscr{L}(t)dt  \right|\\
		& \leq \frac{\rho^{1-\frac{\gamma}{k}}}{k\Gamma_{k}(\gamma)}\left| \int_{1}^{x_{2}}\frac{t^{\rho-1}}{\left(x_{2}^{\rho}-t^{\rho} \right)^{1-\frac{\gamma}{k}}}\mathscr{L}(t)dt
		-\int_{1}^{x_{1}}\frac{t^{\rho-1}}{\left(x_{1}^{\rho}-t^{\rho} \right)^{1-\frac{\gamma}{k}}}\mathscr{L}(t)dt  \right|\\
		& \leq \frac{\rho^{1-\frac{\gamma}{k}}}{k\Gamma_{k}(\gamma)}\left| \int_{1}^{x_{2}}\frac{t^{\rho-1}}{\left(x_{2}^{\rho}-t^{\rho} \right)^{1-\frac{\gamma}{k}}}\mathscr{L}(t)dt
		-\int_{1}^{x_{1}}\frac{t^{\rho-1}}{\left(x_{2}^{\rho}-t^{\rho} \right)^{1-\frac{\gamma}{k}}}\mathscr{L}(t)dt  \right|\\
		& + \frac{\rho^{1-\frac{\gamma}{k}}}{k\Gamma_{k}(\gamma)}\left| \int_{1}^{x_{1}}\frac{t^{\rho-1}}{\left(x_{2}^{\rho}-t^{\rho} \right)^{1-\frac{\gamma}{k}}}\mathscr{L}(t)dt
		-\int_{1}^{x_{1}}\frac{t^{\rho-1}}{\left(x_{1}^{\rho}-t^{\rho} \right)^{1-\frac{\gamma}{k}}}\mathscr{L}(t)dt  \right|\\
		& \leq \frac{\rho^{1-\frac{\gamma}{k}}\parallel \mathscr{L} \parallel}{k\Gamma_{k}(\gamma)}\left| \int_{x_{1}}^{x_{2}}\frac{t^{\rho-1}}{\left(x_{2}^{\rho}-t^{\rho} \right)^{1-\frac{\gamma}{k}}}dt
		+\int_{1}^{x_{1}}\frac{t^{\rho-1}}{\left(x_{2}^{\rho}-t^{\rho} \right)^{1-\frac{\gamma}{k}}}dt
		-\int_{1}^{x_{1}}\frac{t^{\rho-1}}{\left(x_{1}^{\rho}-t^{\rho} \right)^{1-\frac{\gamma}{k}}}dt  \right|\\
		& \leq \frac{\rho^{-\frac{\gamma}{k}}\parallel \mathscr{L} \parallel}{\gamma\Gamma_{k}(\gamma)}\left[ 2(x_{2}^{\rho}-x_{1}^{\rho})^{\frac{\gamma}{k}}+(x_{2}^{\rho}-1)^{\frac{\gamma}{k}}-(x_{1}^{\rho}-1)^{\frac{\gamma}{k}}\right].
	\end{align*}
As $\delta \rightarrow 0$, then $x_{2} \rightarrow x_{1}$ and so
$$\left|(^{\rho}_{k}\mathscr{J}^{\gamma}\mathscr{L})(x_{2})- (^{\rho}_{k}\mathscr{J}^{\gamma}\mathscr{L})(x_{1})\right|\rightarrow 0.$$
Hence
\begin{eqnarray*}
	\left| (\mathscr{D} \mathscr{L})(x_{2})-  (\mathscr{D} \mathscr{L})(x_{1}) \right|& \leq &c_{2}\left|(^{\rho}_{k}\mathscr{J}^{\gamma}\mathscr{L})(x_{2})- (^{\rho}_{k}\mathscr{J}^{\gamma}\mathscr{L})(x_{1})\right|+c_{1}c_{3}\left| \mathscr{L}(x_{2})- \mathscr{L}(x_{1}) \right|+\mu_{\varDelta}(\Bar{\mathds{J}}, \delta)
\end{eqnarray*}
which yields
\[
\mu(\mathscr{D} \mathscr{L},\delta)\leq c_{2}\left|(^{\rho}_{k}\mathscr{J}^{\gamma}\mathscr{L})(x_{2})- (^{\rho}_{k}\mathscr{J}^{\gamma}\mathscr{L})(x_{1})\right|+c_{1}c_{3}\left| \mathscr{L}(x_{2})- \mathscr{L}(x_{1}) \right|+\mu_{\varDelta}(\Bar{\mathds{J}}, \delta).
\]
	\par It is derived from the uniform continuity of $\varDelta$ on $\Bar{\mathds{J}} \times [-\hat{\mathscr{H}},\hat{\mathscr{H}}]\times [-\hat{\mathscr{J}},\hat{\mathscr{J}}] $  that \\ $\lim\limits_{\delta\rightarrow \;0}\mu_{\varDelta}(\Bar{\mathds{J}}, \delta)\rightarrow\; 0,$ as $\;\delta\; \rightarrow \;0$.
	\par Setting\; $\sup_{\mathscr{L} \in \varpi}$\; as well as $\;\delta \rightarrow 0\;$, we\; obtain
	\[
	\mu_{0}(\mathscr{D} \varpi)\leq c_{1}c_{3} \mu_{0}(\varpi).
	\]
	Hereby Corollary \;\ref{c71},   $\mathscr{D}\;$ has\; a \;fixed\; point\; in $\varpi \subseteq \mathscr{Q}_{r_{0}}.$ 
	\\Which\; implies\; the\; equation \;(\ref{eq75}) \;have\; a\; solution\; in\; $\mathscr{B}.$
\end{proof}

\begin{ill}\label{ex7}{\rm
		Take into consideration the following functional integral equation:
		\begin{equation}\label{eq76}
			\mathscr{L}(x)=\frac{\mathscr{L}(x)}{10+x^{4}}+\frac{^{\frac{1}{3}}_{\frac{1}{3}}
				\mathscr{J}^{\frac{2}{3}}
				\mathscr{L}(x)}{20}+ g(x)
		\end{equation}
		for $x\in [1,3]=\Bar{\mathds{J}}.$}\end{ill}
\noindent  Here,\\
\[g(x)=k^{2}x^{-\frac{\gamma}{\rho}}\frac{9+x^{\frac{\gamma}{\rho}}}{10+x^{\frac{\gamma}{\rho}}}=
\frac{1}{9}x^{-2}\frac{9+x^{2}}{10+x^{2}}\] \\and
\[
^{\frac{1}{3}}_{\frac{1}{3}}\mathscr{J}^{\frac{2}{3}}\mathscr{L}(x)=\frac{3^{2}}{\Gamma_{1/3}(2/3)}
\int_{1}^{x}\frac{\left(x^{1/3}-t^{1/3} \right)}{t^{2/3}}\mathscr{L}(t)dt.
\]
Also,
$$\varDelta(x,\mathscr{H},\mathscr{J}_{1})=\frac{1}{9}x^{-2}\frac{9+x^{2}}{10+x^{2}}+\mathscr{H}+\frac{\mathscr{J}_{1}}{20}$$
and
$$\mathscr{H}(x, \mathscr{L})=\frac{\mathscr{L}}{10+x^{4}}.$$
It is trivial that both $\varDelta$ and $\mathscr{H}$ are continuous satisfying
\[
\left| \mathscr{H}(x, \mathscr{L}_{1})-\mathscr{H}(x, \mathscr{L}_{2})\right| \leq \frac{\left|\mathscr{L}_{1}-\mathscr{L}_{2} \right| }{10}
\]
and
\[
\left|\varDelta(x,\mathscr{H},\mathscr{J}_{1})-\varDelta(x,\bar{\mathscr{H}},\bar{\mathscr{J}}_{1}) \right| \leq \left| \mathscr{H}-\bar{\mathscr{H}}\right|  +\frac{1}{20}\left|\mathscr{J}_{1}- \bar{\mathscr{J}}_{1}\right|,
\]
respectively. Therefore
$$c_{1}=1,\;  c_{2}=\frac{1}{20},\; c_{3}=\frac{1}{10}\;\;\;and\;\;\;c_{1}c_{3}=\frac{1}{10}<1.$$

If $\|\mathscr{L} \|\leq r_{0}$, then
\[
\hat{\mathscr{H}}=\frac{r_{0}}{10}
\]
and
\[
\hat{\mathscr{J}}=\frac{(1/3)^{-2}\left(  3^{\frac{1}{3}}-1\right)^{2}r_{0}}{(2/3)\Gamma_{1/3}(2/3)}.
\]
Further,
\[
\left| \varDelta(x,\mathscr{H},\mathscr{J}_{1})\right|\leq \frac{r_{0}}{10}+\frac{1}{20}.\frac{(1/3)^{-2}\left(  3^{\frac{1}{3}}-1\right)^{2}r_{0}}{(2/3)\Gamma_{1/3}(2/3)}+\frac{10}{99}\leq r_{0}.
\]
If we choose $r_{0}=3$ then
\[
\hat{\mathscr{H}}=\frac{3}{10},\;\hat{\mathscr{J}}=\frac{81\left(  3^{\frac{1}{3}}-1\right)^{2}}{40\Gamma_{1/3}(2/3)},  \mathscr{M}=\frac{10}{99}.
\]
which gives
\[
\bar{\varDelta}=0.5657\leq 3.
\]
However, assumption $(H4)$ is also satisfied for $r_{0}=3.$

Consequently, we have achieved all of the assumptions from $(H1)$ to $(H4)$ in the Theorem \ref{T74} .\\
From Theorem \ref{T74} , we can say that The equation (\ref{eq76}) have solutions in $\mathscr{B}=\Bar{\mathds{C}}(\Bar{\mathds{J}}).$

\section{
Establishing a convergent iterative algorithm by use of a MPH approach in order to solve an equation(\ref{eq76}):}
\noindent Theorem \ref{T74} proves the existence of a solution to the funcctional integral equation (\ref{eq76}). In this part, a significant convergent iterative technique is developed to approximate the solution of equation (\ref{eq76}) using the MHP approach. Homotopy and some of its modifications are used to solve nonlinear problems, nonlinear differential equations, integral equation systems, partial differential equations,\; infinite systems of nonlinear singular integral equations,\;   nonlinear integral equations, nonlinear functional integral equations\; and nonlinear quadratic integral equations, respectively (we refer \cite{MHP1, MHP2, MHP3, MHP4, MHP5, MHP6, MHP7,MHP8}).

\noindent Now, consider the following general form of (\ref{eq76}):
\begin{equation}\label{eq77}
	\aleph( \mathscr{L}(x)) - g(x) = 0,~~x\in \Bar{\mathds{J}}=[1,3],
\end{equation}
where $ \aleph$ is a fractional operator that is nonlinear and $g$ is a known function. 
By transforming operator $ \aleph $ into two nonlinear operators, $\aleph_1$ and $\aleph_2,$ and function $g$ into two functions, $g_1$ and $g_2,$ we are able to get
$$ \aleph_1(\mathscr{L}(x)) - g_1(x) + \aleph_2(\mathscr{L}(x)) - g_2(x)=0.$$
The MHP technique is then introduced as
\begin{equation}\label{eq78}
	\aleph(\vartheta(x),p) = \aleph_1(\vartheta(x)) - g_1(x) + p[\aleph_2(\vartheta(x)) - g_2(x)] = 0,~~~p\in [0,1],
\end{equation}
where mapping $\vartheta$ is approximation of mapping $\mathscr{L}$  and $p$ is an embedding parameter with values ranging from $0$ to $1,$ then $ \aleph_1(\vartheta(x)) = g_1(x)$ to $ \aleph(\vartheta(x)) = g(x).$ This suggests that $ p = 1$ is used to approximate the result of (\ref{eq77}). \\
Let the above solution be in the form of a series,
\begin{equation}\label{eq79}
	\mathscr{L}(x) \simeq \vartheta(x) = \sum_{k=0}^{\infty} p^{k} \vartheta_{k}(x)
\end{equation}
and
\begin{equation}\label{MH7}
	\mathscr{L}(x) = \lim_{p\to1} \vartheta(x).
\end{equation}
\noindent Regarding $(\ref{eq76})$ operators $\aleph_1,\aleph_2$ and function $g$ are given as,

\begin{eqnarray}\label{MH71}
	\left\{
	\begin{array}{ll}
		&\aleph_1 (\mathscr{L}(x))=\mathscr{L}(x),\\
		&\aleph_2(\mathscr{L}(x))=-\frac{9(10+x^{2})}{20(9+x^{2})\Gamma_{1/3}(2/3)}
		\int_{1}^{x}\frac{\left(x^{1/3}-t^{1/3} \right)}{t^{2/3}}\mathscr{L}(t)dt,\\
		&g(x) =\frac{1}{9}x^{-2}.
	\end{array} \right.
\end{eqnarray}

\noindent 
Using (\ref{eq78}) to (\ref{MH71}), we may deduce that
\begin{eqnarray}\label{MH72}
	\begin{aligned}
		&\Big(\sum_{k=0}^{\infty} p^{k} \vartheta_{k}(x)-g_{1}(x)\Big)\\
		&+p\Big(-\frac{9(10+x^{2})}{20(9+x^{2})\Gamma_{1/3}(2/3)}
		\sum_{k=0}^{\infty}p^{k}
		\int_{1}^{x}\frac{\left(x^{1/3}-t^{1/3} \right)}{t^{2/3}} \vartheta_{k}(t)dt-g_{2}(x)\Big)=0.
	\end{aligned}
\end{eqnarray}
\noindent 
We drive an iterative algorithm by rearranging $(\ref{MH72})$ in terms of $ p $ powers as follows\\
Now, \textbf{Algorithm:}
\begin{eqnarray}\label{MH73}
	\left\{
	\begin{array}{ll}
		&\vartheta_0(x) = g_{1}(x),\\
		&\\
		&\vartheta_1(x) =\frac{9(10+x^{2})}{20(9+x^{2})\Gamma_{1/3}(2/3)}
		\int_{1}^{x}\frac{\left(x^{1/3}-t^{1/3} \right)}{t^{2/3}} \vartheta_{0}(t)dt+g(x)-g_{1}(x),\\
		&~~~~~~~~~~~~~~~~~~~~~~~~~~~~~~~~~~~~~~~~~~~~~~~~~~~~~~~~~~~~~~~\\
		&\vartheta_k(\varsigma) =\frac{9(10+x^{2})}{20(9+x^{2})\Gamma_{1/3}(2/3)}
		\int_{1}^{x}\frac{\left(x^{1/3}-t^{1/3} \right)}{t^{2/3}} \vartheta_{k-1}(t)dt,~~k = 2,3....
	\end{array}\right.
\end{eqnarray}
Similar to what is presented in \cite{MHP1,MHP2}, the above algorithm's convergent discussion resembles that of \cite{MHP1,MHP2}.
We use algorithm $(\ref{MH73})$ with initial conditions $\vartheta_0(x)=g_{1}(x)=0$ and $g(x)=\frac{1}{9}x^{-2}$~, or a portion of $g(x)$ to approximate the solution of equation (\ref{eq76}). Thus, the terms in the sequence $\{\vartheta_0(x), \vartheta_1(x),..\}$ can be calculated as follows.\\
\textbf{Case.1,} 
\begin{eqnarray}\label{MH74}
	\begin{array}{ll}
		&\vartheta_0(x)=g_{1}(x)=0,\\
		&\vartheta_1(x)=\frac{9(10+x^{2})}{20(9+x^{2})\Gamma_{1/3}(2/3)}
		\int_{1}^{x}\frac{\left(x^{1/3}-t^{1/3} \right)}{t^{2/3}} \vartheta_{0}(t)dt+\frac{1}{9}x^{-2}-g_{1}(x)=\frac{1}{9}x^{-2},\\
		&\vartheta_2(x)=\frac{9(10+x^{2})}{20(9+x^{2})\Gamma_{1/3}(2/3)}
		\int_{1}^{x}\frac{\left(x^{1/3}-t^{1/3} \right)}{t^{2/3}} \vartheta_{1}(t)dt=\frac{3(10+x^{2})}{400(9+x^{2})\Gamma_{1/3}(2/3)}(-5+\frac{1}{x^{4/3}}+4x^{1/3}),\\
		&\vartheta_k(x)=0, k\geqslant3\\
	\end{array}   
\end{eqnarray}
\textbf{Case.2,}
\begin{eqnarray}\label{MH75}
	\begin{array}{ll}
		&\vartheta_0(x)=g_{1}(x)=\frac{1}{9}x^{-2},\\
		&\vartheta_1(x)=\frac{9(10+x^{2})}{20(9+x^{2})\Gamma_{1/3}(2/3)}
		\int_{1}^{x}\frac{\left(x^{1/3}-t^{1/3} \right)}{t^{2/3}} \vartheta_{0}(t)dt=\frac{3(10+x^{2})}{400(9+x^{2})\Gamma_{1/3}(2/3)}(-5+\frac{1}{x^{4/3}}+4x^{1/3}),\\
		&\vartheta_k(x)=0, k\geqslant2\\
	\end{array}
\end{eqnarray}
\begin{figure}[h]
	\centering
	\includegraphics[width=10cm,height=6cm]{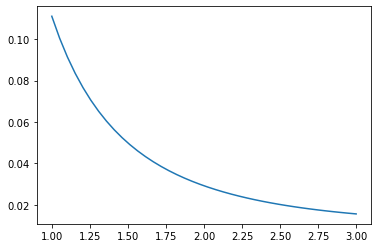}
	\caption{$A\;plot\;showing\;the\;existance\;of\;an\;exact~ solution.$}
	\label{fig:0}
\end{figure}	
\bigskip
\noindent

So, using the equations (\ref{eq79}), (\ref{MH7}), and the findings (\ref{MH74}) and (\ref{MH75}), we get an approximation of the solution to the equation (\ref{eq76}), which is given below :
\begin{equation}\label{MH76}
	\mathscr{L}(x) \simeq \vartheta(x)=\sum_{k=0}^{\infty}\vartheta_{k}(x)
	=\frac{1}{9}x^{-2}+\frac{3(10+x^{2})}{400(9+x^{2})\Gamma_{1/3}(2/3)}(-5+\frac{1}{x^{4/3}}+4x^{1/3}).
\end{equation}
By putting (\ref{MH76}) in (\ref{eq76}) and comparing its both sides, we get absolute point errors to be zero. Therefore, (\ref{MH76}) is the exact solution to (\ref{eq76}). (from figure \ref{fig:0}).  
\begin{rem}
	All computations are done by the Mathematica 11.0 software.
	The plot shown in figure 1 
	is drawn using Python 3.9 and Jupyter Notebook software.
\end{rem}
\vspace{0.5cm}
\textbf{Author contribution} These authors contributed equally.\vspace{0.5cm}\\
\textbf{Funding} This research received no external funding.\vspace{0.5cm}\\
\textbf{Data availability} Not applicable\vspace{0.5cm}\\
\textbf{{\Large Declarations}}\vspace{0.5cm}\\
\textbf{Human and animal ethics} Not applicable\vspace{0.5cm}\\
\textbf{Ethics approval and consent to participate} Not applicable\vspace{0.5cm}\\
\textbf{Consent for publication }All authors give their consent for publication.\vspace{0.5cm}\\
\textbf{Competing interests} The author declares no competing interests.

\end{document}